\documentclass[12pt]{article}
\usepackage{amsmath, amsthm, amscd, amsfonts, amssymb, graphicx}
\usepackage{titles}
\usepackage{algorithm}
\usepackage{algpseudocode}
\usepackage{subfigure} 
\usepackage{caption}
\usepackage{graphics,graphicx}
\usepackage{amsmath, amssymb, amsthm}
\usepackage{mathrsfs} 
\usepackage{enumitem}

\usepackage{indentfirst}

\numberwithin{equation}{section}
\usepackage{epsfig}
\usepackage{color}
\usepackage{fancyhdr}
\usepackage{titlesec}
\usepackage{tikz}
\usepackage{afterpage}
\usetikzlibrary{arrows}
\usetikzlibrary{decorations.markings}
\tikzstyle{block}=[draw opacity=0.7,line width=1.4cm]
\tikzset{
	big black arrow/.style={
		decoration={markings,mark=at position 1 with {\arrow[scale=2.5,black]{>}}},
		postaction={decorate},
		shorten >=0.4pt},
	line/.style={draw, ->}}

\usepackage{eufrak}
\setbox0=\hbox{$+$}
\newdimen\plusheight
\plusheight=\ht0
\def\+{\;\lower\plusheight\hbox{$+$}\;}

\setbox0=\hbox{$-$}
\newdimen\minusheight
\minusheight=\ht0
\def\-{\;\lower\minusheight\hbox{$-$}\;}

\setbox0=\hbox{$\cdots$}
\newdimen\cdotsheight
\cdotsheight=\plusheight
\def\cds{\lower\cdotsheight\hbox{$\cdots$}}
\newtheorem{theorem}{Theorem}[section]
\newtheorem{lemma}[theorem]{Lemma}
\newtheorem{corollary}[theorem]{Corollary}

\theoremstyle{definition}

\theoremstyle{remark}
\newtheorem{remark}[theorem]{Remark}
\numberwithin{equation}{section}

\allowdisplaybreaks
\title{\textbf{Analytic proofs of  Andrews-Bachraoui identities related to two-color partitions with evens in one color}}
\date{}

\begin{document}
\maketitle

\vspace{-2cm}

\begin{center}
	{\bf Gaurab Bardhan$^1$ and Nipen Saikia$^{2, \ast}$}\\
	$^1$Department of Mathematics, Tyagbir Hem Baruah College,\\ Jamugurihat, Sonitpur, Assam, India.\\
	E. Mail: gaurabbardhan561@gmail.com
	\vskip2mm
	$^2$Department of Mathematics, Rajiv Gandhi
	University,\\ Rono Hills, Doimukh, Arunachal Pradesh, India.\\
	E. Mail(s): nipennak@yahoo.com\\
	$^\ast$\textit{Corresponding author}.\end{center}
\begin{abstract}Andrews and Bachraoui (\textit{Int. J. Number Theory} (2026)) studied the two-color partition function $F(n)$ of a non-negative integer $n$ wherein odd parts may appear in two colors (red and blue) and even parts appear in one color (blue). For any non-negative integer $n,$ they also considered some restricted versions of $F(n)$: $F_0(n)$: the number of partitions of $n$ counted by $F(n)$ such that the number of odd parts  in red color is even; $F_1(n)$: the number of partitions counted by $F(n)$ such that the number of odd parts in red color is odd; $H(n)$: the number of partitions of $n$  counted by $F(n)$ such that the parts of the same color do not repeat. The main purpose of this paper is to present the analytic proofs of the $q$-series identities connected with $F(n)$ and $H(n),$ which appeared as open problems in the original paper. We also prove some congruences of 
$F_0(n)$ and $F_1(n)$ modulo $2,$ $4,$  and $8$ by using $ q $-series.
\end{abstract}

 \noindent{\bf Keywords and phrases:} integer partition; two-color partition; $q$-series identities; congruences.\vskip2mm

\noindent{\bf Mathematics Subject Classification: }11P81; 05A17; 11P83. 

\section{Introduction}Throughout the paper, we will use the following standard $q$-series notations from \cite{andrews, gas}. For any complex number $a,$ $a_1,\;a_2,\;\ldots,\;a_k$ and $q$ with $|q|<1$, 
$$
(a;q)_0=1,\qquad
(a;q)_n=\prod_{j=0}^{n-1}(1-aq^j),\qquad
(a;q)_\infty=\prod_{j=0}^{\infty}(1-aq^j).\notag
$$
$$
(a_1,\ldots,a_k;q)_n=\prod_{j=1}^{k}(a_j;q)_n
\qquad\text{and}\qquad 
(a_1,\ldots,a_k;q)_\infty=\prod_{j=1}^{k}(a_j;q)_\infty.
$$
For brevity, we will use the notation $f_t:=(q^t;q^t)_\infty$
for any positive integer $t$.

A non-increasing finite sequence of positive integers
$
\lambda_1\geq \lambda_2\geq\cdots\geq \lambda_k>0
$
is said to be a partition of a positive integer $n$ if
$
n=\lambda_1+\lambda_2+\cdots+\lambda_k.
$ The positive integers $\lambda_i$ are called parts of the partition.
Euler \cite{euler1748introductio} gave the generating function of $p(n)$ as
$$
  \sum_{n=0}^{\infty} p(n) q^n = \dfrac{1}{(q; q)_\infty}; \qquad p(0)=1.\notag 
$$

Recently, Andrews and Bachraoui  \cite{2clr} (see also \cite{arx})  studied the two-color partition function \( F(n) \) of a non-negative integer \( n \) wherein odd parts may appear in two colors (red and blue) and even parts appear in one color (blue).  For more details on two-color partitions, see \cite{and1987,and2021,andbach,andkumar} and references therein. 
Depending on parity and colors of parts, Andrews and Bachraoui in \cite{2clr}  considered some restricted versions of \( F(n) \). In this paper, we are concerned with the following versions of $F(n)$:\\
$\bullet$ \( F_0(n) \): the number of partitions of \( n \) counted by \( F(n) \) such that the number of odd parts in red color is even; \\
$\bullet$ \( F_1(n)\): the number of partitions counted by \( F(n) \) such that the number of odd parts in red color is odd; \\
$\bullet$ \( H(n) \): the number of partitions of \( n \) counted by \( F(n) \) such that the parts of the same color do not repeat. 

The first objective of this paper is to present  analytic proofs of two \(q\)-series identities connected with the partition functions  \( F(n) \) and \( H(n) \), which appeared as open problems in  \cite[pp. 10--11, (5.3) and (5.6)] {2clr}, respectively:
\begin{align}
	&\hspace{-2cm}\sum_{n=0}^{\infty}
	\dfrac{q^{2n}}{(q^2;q^2)_n(q;q^2)_n^2}
	\left(
	1+\dfrac{q}{1-q^{2n+1}}
	+\dfrac{q}{(1-q^{2n+1})^2}
	\right)\notag\\
	&\hspace{-2cm}=1+\sum_{n=0}^{\infty}
	\dfrac{q^{2n+1}}{(q^{2n+2};q^2)_\infty(q^{2n+3};q^2)_\infty^2}
	\left(
	q+\dfrac{1}{1-q^{2n+1}}
	+\dfrac{1}{(1-q^{2n+1})^2}
	\right)\notag\\
	\label{OP1}&\hspace{-2cm}=
	\dfrac{1}{(q^2;q^2)_\infty(q;q^2)_\infty^2}
\end{align}    and 
\begin{align}
	&\sum_{n=0}^{\infty}
	q^{2n+1}(-q^2;q^2)_n(-q;q^2)_{n+1}^2
	\left(
	q+\dfrac{1}{1+q^{2n+1}}
	+\dfrac{1}{(1+q^{2n+1})^2}
	\right)\notag\\
	&=\sum_{n=0}^{\infty}
	q^{2n+1}(-q^{2n+1},-q^{2n+2},-q^{2n+3};q^2)_\infty
	\left(
	1+\dfrac{1}{1+q^{2n+1}}
	+\dfrac{q}{(1+q^{2n+1})(1+q^{2n+2})}
	\right)\notag\\
	\label{2op5.6}&=
	(-q^2;q^2)_\infty(-q;q^2)_\infty^2-1.
\end{align}

The second objective is to prove some congruences modulo \( 2\), \( 4\),  and \( 8 \)  for the partition functions  \( F_0(n) \) and \( F_1(n) \) by using some theta-functions and \( q \)-series. 

From \cite[Theorems 1 and 2]{arx}, the generating functions of \( F_0(n) \) and \( F_1(n) \) are given, respectively, as
\begin{equation}\label{I3}
	\sum_{n=0}^{\infty}F_0(n)q^n
	=
	\dfrac{(q^{16},-q^6,-q^{10};q^{16})_\infty}
	{(q;q^2)_\infty(q^2;q^2)^2_\infty}
\end{equation}
and
\begin{equation}\label{I4}
	\sum_{n=0}^{\infty}F_1(n)q^n
	=
	\dfrac{q(q^{16},-q^2,-q^{14};q^{16})_\infty}
	{(q;q^2)_\infty(q^2;q^2)^2_\infty}.
\end{equation}
\begin{remark}\it
$(i)$~It is observed that the generating functions of \( F_0(n) \) and \( F_1(n) \) given in Theorems 1.3 and 1.5  of \cite[p. 2]{2clr} are incorrect as the minus signs occurring in the numerators of the $q$-products of the generating functions are considered as plus. For example, for $n=6$,  $F_0(6)=24$ with the relevant partitions given by 
\begin{align*}
	&6_b,\\
	&5_b+1_b,\quad 5_r+1_r,\\
	&4_b+2_b,\\
	&4_b+1_b+1_b,\quad 4_b+1_r+1_r,\\
	&3_b+3_b,\quad 3_r+3_r,\\
	&3_b+2_b+1_b,\quad 3_r+2_b+1_r,\\
	&3_b+1_b+1_b+1_b,\quad 3_b+1_b+1_r+1_r,\\
	&3_r+1_b+1_b+1_r,\quad 3_r+1_r+1_r+1_r,\\
	&2_b+2_b+2_b,\\
	&2_b+2_b+1_b+1_b,\quad 2_b+2_b+1_r+1_r,\\
	&2_b+1_b+1_b+1_b+1_b,\quad
	2_b+1_b+1_b+1_r+1_r,\quad
	2_b+1_r+1_r+1_r+1_r,\\
	&1_b+1_b+1_b+1_b+1_b+1_b,\\
	&1_b+1_b+1_b+1_b+1_r+1_r,\\
	&1_b+1_b+1_r+1_r+1_r+1_r,\\
	&1_r+1_r+1_r+1_r+1_r+1_r.
\end{align*}
 and $F_1(6)=16$ with the relevant partitions given by 
\begin{align*}
	\hspace{-2.1cm}&5_b+1_r,\quad 5_r+1_b,\\
	\hspace{-2.1cm}	&4_b+1_b+1_r,\\
	\hspace{-2.1cm}	&3_b+3_r,\\
	\hspace{-2.1cm}	&3_b+2_b+1_r,\quad 3_r+2_b+1_b,\\
	\hspace{-2.1cm}	&3_b+1_b+1_b+1_r,\quad 3_b+1_r+1_r+1_r,\\
	\hspace{-2.1cm}	&3_r+1_b+1_b+1_b,\quad 3_r+1_b+1_r+1_r, \\
	\hspace{-2.1cm}	&2_b+2_b+1_b+1_r,\\
	\hspace{-2.1cm}	&2_b+1_b+1_b+1_b+1_r,\quad
	2_b+1_b+1_r+1_r+1_r,\\
	\hspace{-2.1cm}	&1_b+1_b+1_b+1_b+1_b+1_r,\\
	\hspace{-2.1cm}	&1_b+1_b+1_b+1_r+1_r+1_r,\\
	\hspace{-2.1cm}	&1_b+1_r+1_r+1_r+1_r+1_r,
\end{align*}
which satisfy $F_0(6)+F_1(6)=24+16=40=F(6)$.\\

\noindent$(ii)$ The partition interpretations of \( F_0(n) \) and \( F_1(n) \) given in \cite[p.2, Corollary 1.4 and 1.6]{2clr} are incorrect, whereas the interpretations given in \cite[Corollary 1 and 2]{arx} are correct, respectively.
\end{remark}

Recall that  an overpartition of a positive integer $n$ is a partition of $n$ wherein  the first occurrence (equivalently, the final occurrence) of a part may be overlined.  If $\overline{p}(n)$ denotes the number of overpartitions of $n$, then its generating function \cite{cor} is given by 
\begin{equation}\label{ovp}
	\sum_{n=0}^{\infty}\overline{p}(n)q^n=\dfrac{(-q;q)_\infty}{(q;q)_\infty}
\end{equation}
Andrews and Bachraoui \cite[p.4, Theorem 2.1]{2clr}  proved that
\begin{equation}\label{f0p}
   F_0(n)=\dfrac{\overline{p}(n)+\overline{p}(\frac{n}{2})}{2},  
\end{equation}
\begin{equation}\label{f1p}
   F_1(n)=\dfrac{\overline{p}(n)-\overline{p}(\frac{n}{2})}{2},  
\end{equation}
where $\overline p(\frac{n}{2})=0,$ whenever $n$ is odd.
Employing \eqref{ovp} in \eqref{f0p} and \eqref{f1p} and simplifying, we obtain
\begin{equation}\label{t3h}
\sum_{n\geq 0}F_0(n)q^n
=
\dfrac{1}{2}\left(\dfrac{f_2}{f_1^2}
+
\dfrac{f_4}{f_2^2}\right),
\end{equation}
and 
\begin{equation}\label{f1t3h} 
   \sum_{n\geq 0}F_1(n)q^n
=
\dfrac{1}{2}\left(\dfrac{f_2}{f_1^2}
-
\dfrac{f_4}{f_2^2}\right),
\end{equation}respectively.

The layout of this paper is as follows. In Sect. 2, we state  preliminaries that are useful in proving our results. In Sect. 3, we present the analytical proofs of $q$-series identities \eqref{OP1} and \eqref{2op5.6}. In Sect. 4, we prove congruences modulo $2$, $4$, and $8$ for $F_0(n)$ and $F_1(n)$.

\section{Preliminaries}
Ramanujan's general theta function $f(\alpha, \beta)$ \cite[p. 34, (18.1)]{BBC} is defined by
$$
   f(\alpha, \beta) = \sum_{m=-\infty}^{\infty} \alpha^{m(m+1)/2} \beta^{m(m-1)/2}, \qquad |\alpha\beta| < 1.
$$
Three important special cases for $f(\alpha, \beta)$ are the functions $\varphi(q), \psi(q)$, and $f(-q)$ \cite[p. 35, Entry 18]{BBC}, which are defined as
\begin{equation}\label{phi}
    \varphi(q) := f(q, q) = \sum_{m=-\infty}^{\infty} q^{m^2} = \dfrac{f_2^5}{f_1^2f_4^2},
\end{equation}
\begin{equation}\label{psi}
    \psi(q) := f(q, q^3) = \sum_{m=0}^{\infty} q^{m(m+1)/2} = \dfrac{f_2^2}{f_1},
\end{equation}
and 
\begin{equation}\label{f}
    f(-q) := f(-q, -q^2) = \sum_{m=-\infty}^{\infty} (-1)^m q^{m(3m-1)/2} = f_1.
\end{equation}
The product representations of the  special cases \eqref{phi}-\eqref{f} of $f(\alpha, \beta)$ are the consequences of Jacobi's triple product \cite[ p. 35, Entry 19]{BBC} given by
$$f(\alpha, \beta) = (-\alpha; \alpha\beta)_\infty(-\beta; \alpha\beta)_\infty(\alpha\beta; \alpha\beta)_\infty.$$
The Jacobi's triple product identity can also be stated as \cite{gas}
$$
(q,-xq,-1/x;q)_\infty
=
\sum_{n=-\infty}^{\infty}x^nq^{n(n+1)/2}.
$$
Let $\sigma(n)$ be the sum-of-divisors function, defined by 
$$
\sigma(n)=\sum_{d\mid n}d.  
$$
Then the Lambert series for  $\sigma(n)$ is given by 
	$$
	\sum_{n=1}^\infty \dfrac{n q^{n}}{1 - q^{n}}=\sum_{n=1}^\infty \sigma(n)q^n.	
    $$
Also, if $r_2(n)$ denotes the number of representations of a nonnegative integer $n$ as the sum of two squares of any integers, then
\begin{equation}\label{gp}
	\sum_{n=0}^\infty r_2(n)q^n=\varphi^2(q),
	\end{equation}
    where $r_2(0)=1.$
Also, 
from \cite[Eq. (3.2.1), (3.2.2)]{BBC}, we note that
\begin{equation}\label{r_2}
    r_2(n)=4\sum_{\substack{d|n\\d ~odd}}(-1)^{(d-1)/2}=4(d_{1,4}(n)-d_{3,4}(n)),
\end{equation} where, here and throughout the paper, $d_{k,j}(n)$ denotes the number of positive divisors of $n$ such that $d\equiv j\pmod k$.

\begin{lemma}
For every integer $n\geq0,$ we have 
\begin{equation}\label{r_2.4n+1}
r_2(4n+1)=4\sum_{d\mid 4n+1}\left(\dfrac{-1}{d}\right).
\end{equation}
\end{lemma}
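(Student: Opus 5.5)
The plan is to specialize the divisor formula \eqref{r_2} to $m=4n+1$. Since $4n+1$ is odd, every positive divisor $d$ of $4n+1$ is odd. The restriction ``$d$ odd'' in \eqref{r_2} is therefore vacuous, and
$$r_2(4n+1)=4\sum_{d\mid 4n+1}(-1)^{(d-1)/2}.$$

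Next, identify $(-1)^{(d-1)/2}$ with the Legendre--Jacobi symbol $\left(\frac{-1}{d}\right)$ for odd positive $d$. This is the first supplementary law of quadratic reciprocity:
\[
\left(\dfrac{-1}{d}\right)=\begin{cases}1,& d\equiv 1\pmod 4,\\ -1,& d\equiv 3\pmod 4.\end{cases}
\]
For prime $d$ this is Euler's criterion. For general odd $d$ it extends multiplicatively, using the fact that $\frac{ab-1}{2}\equiv\frac{a-1}{2}+\frac{b-1}{2}\pmod 2$ for odd $a,b$. If one reads $\left(\frac{-1}{d}\right)$ simply as the nontrivial character modulo $4$, the identification is a matter of definition.

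Substituting this into the displayed sum gives \eqref{r_2.4n+1}. As a consistency check, the result can also be written as $4(d_{1,4}(4n+1)-d_{3,4}(4n+1))$, matching the second form in \eqref{r_2}. The only step needing any care is the identification of $(-1)^{(d-1)/2}$ with $\left(\frac{-1}{d}\right)$ for composite odd $d$, which the multiplicativity remark handles.
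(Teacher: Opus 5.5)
Your proposal is correct and follows essentially the same route as the paper: specialize \eqref{r_2} to the odd number $4n+1$, observe that every divisor is odd, and identify each summand with $\left(\frac{-1}{d}\right)$. The only differences are cosmetic. The paper starts from the form $4(d_{1,4}-d_{3,4})$ and simply asserts the values of $\left(\frac{-1}{d}\right)$ by $d \bmod 4$, whereas you start from $(-1)^{(d-1)/2}$ and add a multiplicativity argument to justify this identification for composite $d$.
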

\begin{proof}
From \eqref{r_2}, for $N$ being a positive integer, we have
\begin{equation}\label{A4}
r_2(N)=4(d_{1,4}(N)-d_{3,4}(N)).
\end{equation}
If  $N$ is odd,  every divisor $d$ of $N$ is odd. So
$$\left(\dfrac{-1}{d}\right)=
\begin{cases}
1, & \text{if } d\equiv1\pmod 4,\\
-1, & \text{if } d\equiv3\pmod 4, 
\end{cases}$$ where $\left(\dfrac{.}{.}\right)$ denotes the Jacobi's symbol. Therefore, 
\begin{equation}\label{A6}
\sum_{d\mid N}\left(\dfrac{-1}{d}\right)
=
d_{1,4}(N)-d_{3,4}(N).
\end{equation}
Employing \eqref{A6} in \eqref{A4}, we obtain
\begin{equation}\label{r1}
r_2(N)=4\sum_{d\mid N}\left(\dfrac{-1}{d}\right).
\end{equation}
Setting $N=4n+1$ in \eqref{r1}, we arrive at the desired result. 
\end{proof}

\begin{lemma}
For every integer $n\geq0,$ 
let
\begin{equation}
r_{1,2}(n)=\#\{(u,v)\in\mathbb Z^2:u^2+2v^2=n\}.\notag
\end{equation}Then, we have 
\begin{equation}\label{B3}
r_{1,2}(4n+1)=2\sum_{d\mid 4n+1}\left(\dfrac{-2}{d}\right).
\end{equation}
\end{lemma}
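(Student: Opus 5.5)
The plan is to mirror the proof of the preceding lemma for $r_2(4n+1)$. The only ingredient not yet in the paper is the classical divisor formula for representations by the form $u^2+2v^2$: for every positive integer $N$,
\begin{equation*}
r_{1,2}(N)=2\bigl(d_{1,8}(N)+d_{3,8}(N)-d_{5,8}(N)-d_{7,8}(N)\bigr).
\end{equation*}
This formula is the analogue of \eqref{r_2}, and it can be cited from Berndt \cite{BBC} (Theorem 3.7.3 there, or the corresponding entry). Its generating-function form is $\varphi(q)\varphi(q^2)=1+2\sum_{n\ge1}\bigl(\tfrac{-2}{n}\bigr)\dfrac{q^n}{1-q^n}$, and expanding the Lambert series gives the divisor count directly.

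With that formula in hand, the argument is exactly parallel to the proof of \eqref{r_2.4n+1}.
\begin{itemize}
\item Take $N$ odd. Then every divisor $d$ of $N$ is odd, so the Kronecker symbol $\left(\frac{-2}{d}\right)$ is defined and takes the values
\begin{equation*}
\left(\frac{-2}{d}\right)=\begin{cases}1,& d\equiv1,3\pmod 8,\\ -1,& d\equiv 5,7\pmod 8.\end{cases}
\end{equation*}
These values follow from the supplementary laws: $\left(\frac{-1}{d}\right)=(-1)^{(d-1)/2}$ and $\left(\frac{2}{d}\right)=(-1)^{(d^2-1)/8}$.
\item Summing over divisors then gives $\sum_{d\mid N}\left(\frac{-2}{d}\right)=d_{1,8}(N)+d_{3,8}(N)-d_{5,8}(N)-d_{7,8}(N)$.
\item Substituting this into the divisor formula gives $r_{1,2}(N)=2\sum_{d\mid N}\left(\frac{-2}{d}\right)$ for odd $N$.
\item Setting $N=4n+1$ yields \eqref{B3}.
\end{itemize}

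The main obstacle is justifying the divisor formula for $u^2+2v^2$. The cleanest route is to quote it. For a self-contained argument I would instead use a multiplicativity argument. The form $u^2+2v^2$ is the only reduced form of discriminant $-8$, and the class number $h(-8)=1$. By Dirichlet's formula, the total number of representations of $N$ by all reduced forms of discriminant $D$ is $w\sum_{d\mid N}\left(\frac{D}{d}\right)$ for $N$ coprime to $D$, where $w$ is the number of units. Here $w=2$ (the units are $\pm1$), and the condition $\gcd(N,8)=1$ is automatic for $N=4n+1$. This gives the result directly, even without the full formula for even $N$.

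As a sanity check, for $N=1$ both sides equal $2$, coming from the representations $(\pm1,0)$. For $N=9$ the divisors are $1,3,9$, giving $2(1+1+1)=6$. The direct count confirms this: the representations are $(\pm3,0)$ and $(\pm1,\pm2)$, which is $6$.
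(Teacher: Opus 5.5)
Your proposal is correct and follows the paper's proof essentially line for line. Like the paper, you quote the divisor formula $r_{1,2}(N)=2\bigl(d_{1,8}(N)+d_{3,8}(N)-d_{5,8}(N)-d_{7,8}(N)\bigr)$, evaluate $\left(\frac{-2}{d}\right)$ on odd divisors by residue class mod $8$, substitute, and set $N=4n+1$. Your Dirichlet/class-number-one remark is a valid independent justification of that quoted formula for odd $N$, but the paper does not need it.
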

\begin{proof}
From \cite[Theorem 3.7.3]{bc}, for $N$ being a positive integer, we have
\begin{equation}\label{B4}
r_{1,2}(N)=2\left(d_{1, 8}(N)+d_{3, 8}(N)-d_{5, 8}(N)-d_{7, 8}(N)\right),
\end{equation} If $N$ is odd, every divisor $d$ of $N$ is odd. Thus,
\begin{equation}
\left(\dfrac{-2}{d}\right)=
\begin{cases}
1, & \text{if } d\equiv1,3\pmod 8,\\[3pt]
-1, & \text{if } d\equiv5,7\pmod 8.
\end{cases}\notag
\end{equation}
Therefore, 
\begin{equation}\label{B6}
\sum_{d\mid N}\left(\dfrac{-2}{d}\right)
=
d_{1,8}(N)+d_{3,8}(N)-d_{5,8}(N)-d_{7,8}(N).
\end{equation}
Employing \eqref{B6} in \eqref{B4}, we obtain
\begin{equation}\label{r2}
r_{1,2}(N)=2\sum_{d\mid N}\left(\dfrac{-2}{d}\right).
\end{equation}
Setting $N=4n+1$ in \eqref{r2}, we complete the proof.
\end{proof}

\begin{lemma}\cite{BBC} We have
\begin{equation}\label{P5}
\varphi(q)=\varphi(q^4)+2q\psi(q^8).
\end{equation}    
\end{lemma}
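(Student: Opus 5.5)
The identity \eqref{P5} is a 2-dissection of $\varphi(q)$, and I would prove it directly from the series definition \eqref{phi}, splitting the sum over $m\in\mathbb Z$ by the parity of $m$.

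For even $m=2k$ we get $q^{m^2}=q^{4k^2}$. Summing over $k\in\mathbb Z$ gives exactly $\sum_{k}(q^4)^{k^2}=\varphi(q^4)$.

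For odd $m=2k+1$ we get $q^{m^2}=q^{4k^2+4k+1}=q\cdot q^{8\cdot k(k+1)/2}$. As $k$ runs over $\mathbb Z$, the map $k\mapsto -1-k$ is an involution that preserves $k(k+1)$ and exchanges $k\ge 0$ with $k\le -1$. Hence
$$\sum_{k=-\infty}^{\infty}q^{(2k+1)^2}=2q\sum_{k=0}^{\infty}(q^8)^{k(k+1)/2}=2q\,\psi(q^8),$$
using the series form of $\psi$ in \eqref{psi} with $q$ replaced by $q^8$.

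Adding the two contributions yields $\varphi(q)=\varphi(q^4)+2q\psi(q^8)$. All the series involved converge absolutely for $|q|<1$, so the rearrangement is justified, and no step presents a real obstacle. The only point requiring care is the pairing $k\leftrightarrow -1-k$ that produces the factor $2$. Equivalently, one can note that $f(q,q^3)=\psi(q)$ with $\psi(q)=\tfrac12\sum_{k\in\mathbb Z}q^{k(k+1)/2}$, which is the same symmetry. Alternatively, the identity is the case $a=b=q$ of Entry 25 in \cite{BBC}, but the direct parity split is self-contained and is what I would write.
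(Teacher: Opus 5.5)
Your argument is correct and complete. The paper gives no proof of its own here, only the citation to Berndt, and your parity split of $\sum_{m\in\mathbb Z}q^{m^2}$ (with the pairing $k\leftrightarrow -1-k$ supplying the factor $2$) is the standard derivation behind that reference.

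One harmless slip in your closing aside. Entry~25 of Berndt already states $\varphi(q)+\varphi(-q)=2\varphi(q^4)$ and $\varphi(q)-\varphi(-q)=4q\psi(q^8)$ for $\varphi$ itself, and averaging these gives \eqref{P5}. The specialization $a=b=q$ instead belongs to the general dissection of $f(a,b)\pm f(-a,-b)$, which as far as I recall is Entry~30(ii),(iii).
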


\begin{lemma} We have
   \begin{align}
&\label{P6}\dfrac{1}{f_1^2}
=
\dfrac{f_8^5}{f_2^5f_{16}^2}
+
2q\dfrac{f_4^2f_{16}^2}{f_2^5f_8},\\
&\label{P7}\dfrac{1}{f_1^4}
=
\dfrac{f_4^{14}}{f_2^{14}f_8^4}
+
4q\dfrac{f_4^2f_8^4}{f_2^{10}}.
\end{align}
\end{lemma}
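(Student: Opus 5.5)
Both identities come from the 2-dissection \eqref{P5} of $\varphi(q)$, combined with the product forms \eqref{phi} and \eqref{psi}.

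For \eqref{P6}, the plan is to rewrite $1/f_1^2$ in terms of $\varphi(q)$. From \eqref{phi} we have $\varphi(q)=f_2^5/(f_1^2f_4^2)$, so
\begin{equation*}
\dfrac{1}{f_1^2}=\dfrac{f_4^2}{f_2^5}\,\varphi(q).
\end{equation*}
Substituting \eqref{P5} gives
\begin{equation*}
\dfrac{1}{f_1^2}=\dfrac{f_4^2}{f_2^5}\bigl(\varphi(q^4)+2q\psi(q^8)\bigr).
\end{equation*}
We then replace $q$ by $q^4$ in \eqref{phi} and by $q^8$ in \eqref{psi}:
\begin{equation*}
\varphi(q^4)=\dfrac{f_8^5}{f_4^2f_{16}^2},\qquad \psi(q^8)=\dfrac{f_{16}^2}{f_8}.
\end{equation*}
Multiplying by $f_4^2/f_2^5$, the first term becomes $f_8^5/(f_2^5f_{16}^2)$ and the second becomes $2q f_4^2f_{16}^2/(f_2^5f_8)$, which is exactly \eqref{P6}.

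For \eqref{P7}, the analogous start is $1/f_1^4=(f_4^4/f_2^{10})\varphi(q)^2$. Squaring \eqref{P5} directly would produce $\varphi(q^4)^2+4q\varphi(q^4)\psi(q^8)+4q^2\psi(q^8)^2$, which has three terms. The target has only two, so I would not go that route. Instead I would use the other classical 2-dissection,
\begin{equation*}
\varphi(q)^2=\varphi(q^2)^2+4q\psi(q^4)^2,
\end{equation*}
from \cite{BBC} (Entry 25). This can also be derived from \eqref{P5}, since $\varphi(q^2)^2=\varphi(q^4)^2+4q^2\psi(q^8)^2$ and $\varphi(q^4)\psi(q^8)=\psi(q^4)^2$. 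Inserting the product forms with $q\to q^2$ and $q\to q^4$ gives
\begin{equation*}
\varphi(q^2)^2=\dfrac{f_4^{10}}{f_2^4f_8^4},\qquad \psi(q^4)^2=\dfrac{f_8^4}{f_4^2}.
\end{equation*}
Multiplying by $f_4^4/f_2^{10}$ yields the two terms $f_4^{14}/(f_2^{14}f_8^4)$ and $4q f_4^2f_8^4/f_2^{10}$, which is \eqref{P7}.

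The only nonroutine step is justifying $\varphi(q)^2=\varphi(q^2)^2+4q\psi(q^4)^2$. I would cite Berndt \cite{BBC} for it. If a self-contained argument is preferred, it suffices to check $\varphi(q^4)\psi(q^8)=\psi(q^4)^2$, which holds in product form because both sides equal $f_8^4/f_4^2$, and then expand the square of \eqref{P5}. With that in hand, the remaining work is exponent bookkeeping.
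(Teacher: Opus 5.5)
Your argument is correct and is essentially the paper's. The paper simply cites Hirschhorn's book for both formulas, calling them the $2$-dissections of $\varphi(q)$ and (really) $\varphi(q)^2$. You make this explicit by substituting \eqref{P5} and $\varphi(q)^2=\varphi(q^2)^2+4q\psi(q^4)^2$ into $1/f_1^2=(f_4^2/f_2^5)\varphi(q)$ and $1/f_1^4=(f_4^4/f_2^{10})\varphi(q)^2$. All the exponent bookkeeping checks out.

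Your ``self-contained'' fallback for the dissection of $\varphi(q)^2$ is circular as written. Squaring \eqref{P5} and using $\varphi(q^4)\psi(q^8)=\psi(q^4)^2$ gives $\varphi(q)^2=\varphi(q^4)^2+4q^2\psi(q^8)^2+4q\psi(q^4)^2$. To finish you then invoke $\varphi(q^2)^2=\varphi(q^4)^2+4q^2\psi(q^8)^2$, which is exactly the identity being proved with $q$ replaced by $q^2$.

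The repair is short. Put $E(q)=\varphi(q)^2-\varphi(q^2)^2-4q\psi(q^4)^2=\sum_{n\ge0}e_nq^n$. The computation above says precisely that $E(q)=-E(q^2)$, i.e.\ $e_{2m+1}=0$ and $e_{2m}=-e_m$. Since $e_0=0$, strong induction gives $E\equiv0$.

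With that addition, or by relying on the citation to Berndt as the paper relies on Hirschhorn, your proof is complete.
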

\begin{proof}
    The above two identities are the $2$-dissection of $\varphi(q)$ and $\varphi(q^2)$ respectively (see \cite[Eqs. (1.9.4) and (1.10.1)]{md}).
\end{proof}
The next lemma is an easy consequence of the binomial theorem.
\begin{lemma}For any positive integer $t,$ we have 
\begin{equation}\label{2t}
    f_1^{2^t}\equiv f_{2}^{2^{t-1}}\pmod{2^t}. 
\end{equation}   
\end{lemma}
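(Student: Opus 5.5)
We prove \eqref{2t} by induction on $t$, using only the binomial theorem applied to the power series $f_1$ with integer coefficients. Congruences between power series are understood coefficientwise: $A\equiv B\pmod{m}$ means every coefficient of $A-B$ is divisible by $m$.

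\textbf{Base case $t=1$.} We need $f_1^2\equiv f_2\pmod 2$. Since every cross term in the square of a power series with integer coefficients carries a factor of $2$,
$$
f_1^2=\prod_{j\ge1}(1-q^j)^2=\prod_{j\ge1}\bigl(1-2q^j+q^{2j}\bigr)\equiv\prod_{j\ge1}(1+q^{2j})\equiv\prod_{j\ge1}(1-q^{2j})=f_2 \pmod 2 .
$$
(Equivalently, $F(q)^2\equiv F(q^2)\pmod 2$ for any $F\in\mathbb{Z}[[q]]$.)

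\textbf{Inductive step.} Suppose $f_1^{2^t}=f_2^{2^{t-1}}+2^tG$ for some $G\in\mathbb{Z}[[q]]$. Squaring gives
$$
f_1^{2^{t+1}}=f_2^{2^t}+2^{t+1}f_2^{2^{t-1}}G+2^{2t}G^2 .
$$
The middle term is divisible by $2^{t+1}$. The last term is divisible by $2^{t+1}$ because $2t\ge t+1$ for $t\ge1$. Hence $f_1^{2^{t+1}}\equiv f_2^{2^t}\pmod{2^{t+1}}$, which completes the induction.

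There is no real obstacle here. The only point to check is that all series involved have integer coefficients, so that coefficientwise congruences are preserved under multiplication and squaring. This holds because $f_1$ and $f_2$ are products of factors $1-q^k$.
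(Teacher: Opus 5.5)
Your proof is correct and is essentially the paper's approach. The paper gives no details, saying only that the lemma is an easy consequence of the binomial theorem, and your base case $f_1^2\equiv f_2\pmod 2$ together with the inductive step expanding $\bigl(f_2^{2^{t-1}}+2^tG\bigr)^2$ is exactly that binomial-theorem argument.
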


\section{Analytic proofs of $q$-series identities \eqref{OP1} and \eqref{2op5.6}}
In this section, we provide analytic proofs of the \(q\)-series identities \eqref{OP1} and \eqref{2op5.6} connected with the partition functions  \( F(n) \) and \( H(n) \), which appeared as open problems in  \cite[p. 10-11, (5.3) and (5.6)] {2clr}, respectively:\\

\noindent{\bf \textit{Proof of the identity \eqref{OP1}}:}
For any  integer $n\ge 0$, let
\begin{equation}\label{1ope1}
A_n=\dfrac{1}{(q^2;q^2)_n(q;q^2)_{n+1}^2}
\end{equation}
and $A_{-1}=0$. Then for any integer $n\geq1$,
\begin{align}
   A_n-A_{n-1}&=\dfrac{1}{(q^2;q^2)_n(q;q^2)_n^2(1-q^{2n+1})^2}-\dfrac{1}{(q^2;q^2)_{n-1}(q;q^2)_n^2}\notag\\
   &=\dfrac{1}{(q^2;q^2)_n(q;q^2)_n^2(1-q^{2n+1})^2}-\dfrac{1-q^{2n}}{(q^2;q^2)_n(q;q^2)_n^2}\notag\\
   &=\dfrac{1}{(q^2;q^2)_n(q;q^2)_n^2}
\left(
\dfrac{1}{(1-q^{2n+1})^2}-(1-q^{2n})
\right)\notag\\
&=\dfrac{1}{(q^2;q^2)_n(q;q^2)_n^2}\left(\dfrac{1-(1-q^{2n})(1-q^{2n+1})^2}{(1-q^{2n+1})^2}\right)\notag\\
&=\dfrac{1}{(q^2;q^2)_n(q;q^2)_n^2}\left(\dfrac{q^{2n}\left((1-q^{2n+1})^2+q(1-q^{2n+1})+q\right)}{(1-q^{2n+1})^2}\right)\notag\\
&=\dfrac{q^{2n}}{(q^2;q^2)_n(q;q^2)_n^2}
\left(
1+\dfrac{q}{1-q^{2n+1}}
+\dfrac{q}{(1-q^{2n+1})^2}
\right), \notag
\end{align}
which implies
\begin{equation}\label{1ope3}\
\sum_{n=0}^{N}
\dfrac{q^{2n}}{(q^2;q^2)_n(q;q^2)_n^2}
\left(
1+\dfrac{q}{1-q^{2n+1}}
+\dfrac{q}{(1-q^{2n+1})^2}
\right)
=\sum_{n=0}^{N}(A_n-A_{n-1})
=A_N-A_{-1}=A_N.
\end{equation}
Employing  \eqref{1ope1} in \eqref{1ope3} and then taking limit $N\rightarrow\infty$, we obtain
\begin{equation}\label{1ope4}\sum_{n=0}^{\infty}
\dfrac{q^{2n}}{(q^2;q^2)_n(q;q^2)_n^2}
\left(
1+\dfrac{q}{1-q^{2n+1}}
+\dfrac{q}{(1-q^{2n+1})^2}
\right)
=\lim_{N\to\infty} A_N=\dfrac{1}{(q^2;q^2)_\infty(q;q^2)_\infty^2}.
\end{equation}
Again,   for any integer $n\ge 0$, let
\begin{equation}\label{bn}B_n=
\dfrac{1}{(q^{2n+2};q^2)_\infty(q^{2n+1};q^2)_\infty^2}.\end{equation}
Now
\begin{align}
B_n-B_{n+1}
&=
\dfrac{1}{(q^{2n+2};q^2)_\infty(q^{2n+1};q^2)_\infty^2}
-
\dfrac{1}{(q^{2n+4};q^2)_\infty(q^{2n+3};q^2)_\infty^2}
\notag\\
&=
\dfrac{1}{(q^{2n+2};q^2)_\infty(q^{2n+3};q^2)_\infty^2}
\left(
\dfrac{1}{(1-q^{2n+1})^2}
-(1-q^{2n+2})
\right)
\notag\\
&=
\dfrac{1}{(q^{2n+2};q^2)_\infty(q^{2n+3};q^2)_\infty^2}
\left(
\dfrac{1-(1-q^{2n+2})(1-q^{2n+1})^2}{(1-q^{2n+1})^2}
\right)
\notag\\
&=
\dfrac{1}{(q^{2n+2};q^2)_\infty(q^{2n+3};q^2)_\infty^2}
\left(
\dfrac{
q^{2n+1}
\left(
q(1-q^{2n+1})^2+(1-q^{2n+1})+1
\right)
}
{(1-q^{2n+1})^2}
\right)
\notag\\
&=
\dfrac{q^{2n+1}}{(q^{2n+2};q^2)_\infty(q^{2n+3};q^2)_\infty^2}
\left(
q+\dfrac{1}{1-q^{2n+1}}
+\dfrac{1}{(1-q^{2n+1})^2}
\right), \notag
\end{align}
which  implies
\begin{equation}\label{2ope5}
\sum_{n=0}^{N}
\dfrac{q^{2n+1}}{(q^{2n+2};q^2)_\infty(q^{2n+3};q^2)_\infty^2}
\left(
q+\dfrac{1}{1-q^{2n+1}}
+\dfrac{1}{(1-q^{2n+1})^2}
\right)
=
\sum_{n=0}^{N}(B_n-B_{n+1})
=
B_0-B_{N+1}.
\end{equation}
Employing \eqref{bn} in \eqref{2ope5}  and then taking limit $N\rightarrow\infty$, we obtain
\begin{equation}\label{oa1}
1+\sum_{n=0}^{\infty}
\dfrac{q^{2n+1}}{(q^{2n+2};q^2)_\infty(q^{2n+3};q^2)_\infty^2}
\left(
q+\dfrac{1}{1-q^{2n+1}}
+\dfrac{1}{(1-q^{2n+1})^2}
\right)
=
\dfrac{1}{(q^2;q^2)_\infty(q;q^2)_\infty^2}.
\end{equation}
Combining \eqref{oa1} and \eqref{1ope4}, we complete the proof of  \eqref{OP1}.\\

\noindent {\bf \textit{Proof of the identity \eqref{2op5.6}}:}
For any integer $n\geq0$, let
\begin{equation}\label{1op56e1}
C_n=(-q^2;q^2)_{n+1}(-q;q^2)_{n+1}^2,
\end{equation}
and $C_{-1}=1.$ Then for any integer $n\geq1,$  we have
\begin{align}
C_n-C_{n-1}
&=
(-q^2;q^2)_{n+1}(-q;q^2)_{n+1}^2
-
(-q^2;q^2)_n(-q;q^2)_n^2
\notag\\
&=
(-q^2;q^2)_n(-q;q^2)_{n+1}^2(1+q^{2n+2})
-
\dfrac{(-q^2;q^2)_n(-q;q^2)_{n+1}^2}{(1+q^{2n+1})^2}
\notag\\
&=
(-q^2;q^2)_n(-q;q^2)_{n+1}^2
\left(
1+q^{2n+2}
-\dfrac{1}{(1+q^{2n+1})^2}
\right)
\notag\\
&=
(-q^2;q^2)_n(-q;q^2)_{n+1}^2
\left(
\dfrac{(1+q^{2n+2})(1+q^{2n+1})^2-1}{(1+q^{2n+1})^2}
\right)
\notag\\
&=
(-q^2;q^2)_n(-q;q^2)_{n+1}^2
\left(
\dfrac{
q^{2n+1}
\left(
q(1+q^{2n+1})^2+(1+q^{2n+1})+1
\right)
}
{(1+q^{2n+1})^2}
\right)
\notag\\
&=
q^{2n+1}(-q^2;q^2)_n(-q;q^2)_{n+1}^2
\left(
q+\dfrac{1}{1+q^{2n+1}}
+\dfrac{1}{(1+q^{2n+1})^2}
\right)\notag
\end{align}
which implies
\begin{equation}\label{1op56e3}
\sum_{n=0}^{N}
q^{2n+1}(-q^2;q^2)_n(-q;q^2)_{n+1}^2
\left(
q+\dfrac{1}{1+q^{2n+1}}
+\dfrac{1}{(1+q^{2n+1})^2}
\right)
=
\sum_{n=0}^{N}(C_n-C_{n-1})
=
C_N-C_{-1}.
\end{equation}
Employing \eqref{1op56e1} in \eqref{1op56e3}  and then taking limit $N\rightarrow \infty$, we obtain 
$$\hspace{-5cm}\sum_{n=0}^{\infty}
q^{2n+1}(-q^2;q^2)_n(-q;q^2)_{n+1}^2
\left(
q+\dfrac{1}{1+q^{2n+1}}
+\dfrac{1}{(1+q^{2n+1})^2}
\right)
$$\begin{equation}\label{OP2E1}=
\lim_{N\to\infty}(C_N-C_{-1})
=
(-q^2;q^2)_\infty(-q;q^2)_\infty^2-1.
\end{equation}
Again, for any integer $n\ge0$,  let
\begin{equation}\label{2op56e1}
D_n=(-q^{2n+1};q^2)_\infty^2(-q^{2n+2};q^2)_\infty.
\end{equation}
Now, 
\begin{align}
D_n-D_{n+1}
&=
(-q^{2n+1};q^2)_\infty^2(-q^{2n+2};q^2)_\infty
-
(-q^{2n+3};q^2)_\infty^2(-q^{2n+4};q^2)_\infty
\notag\\
&=
(1+q^{2n+1})
(-q^{2n+1},-q^{2n+2},-q^{2n+3};q^2)_\infty
-
\dfrac{(-q^{2n+1},-q^{2n+2},-q^{2n+3};q^2)_\infty}
{(1+q^{2n+1})(1+q^{2n+2})}
\notag\\
&=
(-q^{2n+1},-q^{2n+2},-q^{2n+3};q^2)_\infty
\left(
1+q^{2n+1}
-
\dfrac{1}{(1+q^{2n+1})(1+q^{2n+2})}
\right)
\notag\\
&=
(-q^{2n+1},-q^{2n+2},-q^{2n+3};q^2)_\infty
\left(
\dfrac{
(1+q^{2n+1})^2(1+q^{2n+2})-1
}
{(1+q^{2n+1})(1+q^{2n+2})}
\right)
\notag\\
&=
(-q^{2n+1},-q^{2n+2},-q^{2n+3};q^2)_\infty
\notag\\
&\qquad \times
\left(
\dfrac{
q^{2n+1}
\left(
(1+q^{2n+1})(1+q^{2n+2})
+(1+q^{2n+2})
+q(1+q^{2n+1})
\right)
}
{(1+q^{2n+1})(1+q^{2n+2})}
\right)
\notag\\
&=
q^{2n+1}(-q^{2n+1},-q^{2n+2},-q^{2n+3};q^2)_\infty
\left(
1+\dfrac{1}{1+q^{2n+1}}
+\dfrac{q}{(1+q^{2n+1})(1+q^{2n+2})}
\right)\notag
\end{align}
which implies
\begin{align}
&\sum_{n=0}^{N}
q^{2n+1}(-q^{2n+1},-q^{2n+2},-q^{2n+3};q^2)_\infty
\left(
1+\dfrac{1}{1+q^{2n+1}}
+\dfrac{q}{(1+q^{2n+1})(1+q^{2n+2})}
\right)
\notag\\
&\label{2op56e5}=
\sum_{n=0}^{N}(D_n-D_{n+1})
=
D_0-D_{N+1}.
\end{align}
Employing \eqref{2op56e1} in \eqref{2op56e5} and taking limit $N\rightarrow\infty$, we obtain
\begin{align}
&\sum_{n=0}^{\infty}
q^{2n+1}(-q^{2n+1},-q^{2n+2},-q^{2n+3};q^2)_\infty
\left(
1+\dfrac{1}{1+q^{2n+1}}
+\dfrac{q}{(1+q^{2n+1})(1+q^{2n+2})}
\right)
\notag\\
&=
\lim_{N\to\infty}(D_0-D_{N+1})
\notag\\
&\label{OPE2}=
D_0-1=
(-q^2;q^2)_\infty(-q;q^2)_\infty^2-1.
\end{align}
Combining \eqref{OP2E1} and \eqref{OPE2}, we complete the proof of the  identity \eqref{2op5.6}.

\section{Congruences for $F_0(n)$ and $F_1(n)$} 
This section is devoted to proving some congruences modulo \( 2\), \( 4\),  and \( 8 \)  for the partition functions  \( F_0(n) \) and \( F_1(n) \).

\begin{theorem}\label{TM2}For any integer $n\ge1$ and $i\in\{0, 1\}$, we have
 \begin{equation}\label{Tm2}
F_{i}(n)\equiv
\begin{cases}
1 \pmod 2, & \text{if } n \text{ is a perfect square or twice of a perfect square},\\[4pt]
0 \pmod 2, & \text{otherwise}.
\end{cases}    
\end{equation}
\end{theorem}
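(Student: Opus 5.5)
The plan is to work from the overpartition representations \eqref{f0p} and \eqref{f1p} rather than from the product formulas \eqref{I3}--\eqref{I4}. Parity of $F_i(n)$ is then controlled by $\overline{p}(n)$ and $\overline{p}(n/2)$ modulo $4$. The key observation is that, by \eqref{phi} with $q\to -q$, we have $\varphi(-q)=f_1^2/f_2$. Hence \eqref{ovp} reads
\[
\sum_{n\ge0}\overline{p}(n)q^n=\frac{f_2}{f_1^2}=\frac{1}{\varphi(-q)},
\]
which matches the first term in \eqref{t3h}. Likewise $f_4/f_2^2=1/\varphi(-q^2)$.

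First, I will compute $1/\varphi(-q)$ modulo $4$. Write
\[
\varphi(-q)=1+2S(q),\qquad S(q)=\sum_{k\ge1}(-1)^kq^{k^2}.
\]
Expanding the geometric series as a formal power series, $1/(1+2S)=\sum_{j\ge0}(-2S)^j$. Every term with $j\ge2$ is divisible by $4$, so
\[
\frac{1}{\varphi(-q)}\equiv 1-2S(q)\pmod 4 .
\]
Reading off coefficients gives, for $n\ge1$, $\overline{p}(n)\equiv 2\pmod 4$ when $n$ is a perfect square and $\overline{p}(n)\equiv0\pmod 4$ otherwise. The sign $(-1)^k$ is irrelevant since $-2\equiv 2\pmod 4$. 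Equivalently, $\overline{p}(n)/2$ is odd exactly when $n\ge1$ is a square; this makes sense because $\overline{p}(n)$ is even for $n\ge1$.

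Next, I will apply \eqref{f0p} and \eqref{f1p}:
\[
F_i(n)=\frac{\overline{p}(n)}{2}\pm\frac{\overline{p}(n/2)}{2}\equiv \frac{\overline{p}(n)}{2}+\frac{\overline{p}(n/2)}{2}\pmod 2 .
\]
Here the term $\overline{p}(n/2)$ is $0$ for odd $n$. For $n\ge1$ we also have $n/2\ge1$ whenever $n$ is even, so the previous step applies to both terms. Thus $F_i(n)\equiv \chi(n)+\chi(n/2)\pmod 2$, where $\chi(m)$ is the indicator that $m$ is a positive perfect square.

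Finally, $n$ and $n/2$ cannot both be perfect squares, since otherwise $\sqrt2$ would be rational. So the sum $\chi(n)+\chi(n/2)$ is $1$ exactly when $n$ is a square or twice a square, and $0$ otherwise. This gives \eqref{Tm2} for both $i=0$ and $i=1$.

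The only delicate point I anticipate is justifying the mod $4$ reduction of the reciprocal and handling the boundary case $n/2=0$, which the hypothesis $n\ge1$ excludes. Everything else is bookkeeping. As a sanity check, $F_0(6)=24$ and $F_1(6)=16$ are both even, consistent with $6$ being neither a square nor twice a square.
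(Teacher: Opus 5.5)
Your proof is correct and follows essentially the paper's route. Both arguments reduce $f_2/f_1^2$ (and $f_4/f_2^2$) modulo $4$ to $1+2\sum_{r\ge1}q^{r^2}$ (resp.\ with $q\to q^2$) and then combine the two terms via \eqref{t3h}--\eqref{f1t3h}. The paper gets the mod-$4$ reduction from $f_2/f_1^2=\varphi(q)\prod_{m\ge1}\bigl(1+4q^{2m}/(1-q^{2m})^2\bigr)$, whereas you invert $\varphi(-q)=f_1^2/f_2$ as a geometric series, which works equally well. Your explicit remark that $n$ cannot be both a square and twice a square also fills in a point the paper leaves implicit when it reads off coefficients.
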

\begin{proof}We have
\begin{align}
\dfrac{f_2}{f_1^2}=\dfrac{(q^2;q^2)_\infty}{(q;q)_\infty^2}
&=
\varphi(q)\,
\dfrac{(q^4;q^4)_\infty^2}{(q^2;q^2)_\infty^4} \notag \\
&=
\varphi(q)
\prod_{m=1}^\infty
\left(\dfrac{1+q^{2m}}{1-q^{2m}}\right)^2\notag\\
&=\varphi(q)\prod_{m=1}^\infty\left(1+\dfrac{4q^{2m}}{(1-q^{2m})^2}\right)\notag\\
&\equiv \varphi(q) \pmod 4\notag\\
&\label{m4f0}\equiv
1+2\sum_{r=1}^\infty q^{r^2}
\pmod 4.
\end{align}
Replacing $q$ by $q^2$ in \eqref{m4f0}, we obtain 
\begin{equation}\label{m4f01}
    \dfrac{f_4}{f_2^2}= \dfrac{(q^4;q^4)_\infty}{(q^2;q^2)_\infty^2}\equiv1+2\sum_{r=1}^{\infty}q^{2r^2}
\pmod 4.
\end{equation}
Employing \eqref{m4f0} and \eqref{m4f01} in \eqref{t3h}, we obtain
\begin{equation}\label{m4f02}
    2\sum_{n= 0}^\infty F_0(n)q^n\equiv2+2\sum_{r=1}^\infty q^{r^2}+2\sum_{r=1}^\infty q^{2r^2}\pmod 4, 
\end{equation}
which is equivalent to
\begin{equation}\label{m4f03}
    \sum_{n=0}^\infty F_0(n)q^n \equiv1+\sum_{r=1}^\infty q^{r^2}+\sum_{r=1}^\infty q^{2r^2}\pmod 2. 
\end{equation}Equating the coefficients of $q^n$ on both sides of \eqref{m4f03}, we complete the proof of the case $i=0$.

Again, employing \eqref{m4f0} and \eqref{m4f01} in \eqref{f1t3h}, we obtain
\begin{equation}\label{m4f04}
    \sum_{n\geq 0}F_1(n)q^n \equiv\sum_{r\geq1}q^{r^2}+\sum_{r\geq1}q^{2r^2}\pmod 2. 
\end{equation}
Equating the coefficients of $q^n$ on both sides of \eqref{m4f04}, we complete the proof of the case $i=1$.
\end{proof}

\begin{corollary}\label{cm2}
Let $M$ and $r$ be integers such that  $M\geq1$ and $0\leq r<M$. Let
\begin{equation}
\mathcal Q_M
=
\{x^2\pmod M:x\in\mathbb Z\}
\cup
\{2x^2\pmod M:x\in\mathbb Z\}.\notag
\end{equation}
Then, for $i\in\{0,1\}$, we have 
\begin{equation}\label{g2}
F_i(Mn+r)\equiv0\pmod 2\text{ for all $n\geq0$ }
\iff
r\notin \mathcal Q_M.\notag
\end{equation}
\end{corollary}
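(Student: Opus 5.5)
The plan is to derive both directions of the equivalence directly from Theorem \ref{TM2}, which says that for $n\ge1$ the parity of $F_i(n)$ is odd exactly when $n$ is a perfect square or twice a perfect square. Throughout, I use the observation that every residue in $\mathcal Q_M$ can be realized by an actual square or twice a square that is as large as we like. The only care needed is that Theorem \ref{TM2} is stated for $n\ge1$, so every argument $Mn+r$ to which it is applied must be at least $1$.

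\emph{($\Leftarrow$)} Suppose $r\notin\mathcal Q_M$ and fix $n\ge0$.
\begin{itemize}
\item First, $Mn+r\ge1$. Indeed, $0=0^2\in\mathcal Q_M$, so $r\neq0$.
\item If $Mn+r=x^2$ for some integer $x$, then reducing modulo $M$ gives $r\equiv x^2\pmod M$, so $r\in\mathcal Q_M$, a contradiction.
\item Likewise, if $Mn+r=2x^2$, then $r\equiv 2x^2\pmod M$, again a contradiction.
\end{itemize}
Hence $Mn+r$ is neither a square nor twice a square, and Theorem \ref{TM2} gives $F_i(Mn+r)\equiv0\pmod 2$ for all $n\ge0$.

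\emph{($\Rightarrow$)} I argue by contraposition. Suppose $r\in\mathcal Q_M$; then $r\equiv c\,x^2\pmod M$ for some $x\in\mathbb Z$ and some $c\in\{1,2\}$.
\begin{itemize}
\item Replace $x$ by $x'=|x|+kM$ with $k\ge1$ chosen large enough that $c\,x'^2\ge r$ and $x'\ge1$.
\item Since $x'\equiv \pm x\pmod M$, we still have $c\,x'^2\equiv r\pmod M$.
\item Therefore $c\,x'^2=Mn_0+r$ for an integer $n_0=(c\,x'^2-r)/M\ge0$, and $Mn_0+r=c\,x'^2\ge1$.
\end{itemize}
By Theorem \ref{TM2}, $F_i(Mn_0+r)\equiv1\pmod 2$. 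So the congruence $F_i(Mn+r)\equiv0\pmod2$ fails for this $n_0$, which proves the forward implication.

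The only potential obstacle is the boundary issue around $n=0$, and specifically $F_1(0)=0$ even though $0$ is a perfect square. This is handled by the two remarks above: $r=0$ always lies in $\mathcal Q_M$, and the witness $c\,x'^2$ in the second direction is taken to be at least $1$. With these in place the rest is immediate.
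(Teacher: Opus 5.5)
Your proof is correct and takes the same route as the paper. Both directions come directly from Theorem \ref{TM2}. For the forward direction you lift the representative $x$ to $|x|+kM$ (the paper uses $x+tM$ with $t$ large), which gives a genuine square or twice a square of the form $Mn_0+r$ with $n_0\ge0$.

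Your explicit boundary handling goes beyond what the paper states. You note that $0\in\mathcal Q_M$, so $Mn+r\ge1$ in the reverse direction, and you make sure the witness is at least $1$ because $F_1(0)=0$. The paper leaves both points implicit, so your version is slightly more careful.
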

\begin{proof}Suppose 
$
r\notin \mathcal Q_M
$ and  for some integer $n\ge0$, 
\begin{equation}\label{g5}
F_i(Mn+r)\equiv1\pmod 2
\end{equation} Then by Theorem \ref{TM2} and \eqref{g5}, for some integer $x\geq0$,
\begin{equation}
Mn+r=x^2
\quad \text{or} \quad
Mn+r=2x^2\notag
\end{equation}
 which implies $r\in Q_M,$ which is a contradiction.  \\
Conversely, assume that for any integer $n\ge 0$, 
\begin{equation}\label{g9}
F_i(Mn+r)\equiv0\pmod 2
\end{equation} holds.
Suppose $
r\in\mathcal Q_M.
$
Then for some integer $x$, either
$
r\equiv x^2\pmod M
\text{ or }
r\equiv2x^2\pmod M.
$
If $r\equiv x^2\pmod M$,  choose an integer $t$ sufficiently large such that
$
X=x+tM. $
Then
$
X^2\equiv x^2\equiv r\pmod M.$
Also, for sufficiently large $t$, $X^2\geq r$. Therefore, 
$
N=\dfrac{X^2-r}{M}$
is a nonnegative integer and 
$
MN+r=X^2.\notag
$
So by Theorem \ref{TM2}, we have
\begin{equation}
F_i(MN+r)=F_i(X^2)\equiv1\pmod 2,\notag
\end{equation}
which is a contradiction to  \eqref{g9}.\\
Again, if  $r\equiv2x^2\pmod M,$  choose an integer $t$ sufficiently large such that 
$
X=x+tM.
$
\begin{equation}
2X^2\equiv 2x^2\equiv r\pmod M.\notag
\end{equation}
Therefore, $M$ divides $2X^2-r$  and also for sufficiently large $t$, $2X^2\geq r$. So
$
N=\dfrac{2X^2-r}{M}$
is a nonnegative integer and $
MN+r=2X^2.$
Then Theorem \ref{TM2} implies
\begin{equation}
F_i(MN+r)=F_i(2X^2)\equiv1\pmod 2,\notag
\end{equation}
which is a contradiction to \eqref{g9}. Therefore, 
$
r\notin\mathcal Q_M.
$
Hence, the proof is complete.
\end{proof}

\begin{corollary}
Let $p$ be an odd prime. Let $\alpha$ and $s$ be any nonnegative integers. If $B$ is a positive integer such that $p\nmid B,$ then for $i\in\{0,1\}$, 
\begin{equation}
F_i\left(2^\alpha p^{2s+1}B\right)\equiv0\pmod{2}.\notag
\end{equation}
\end{corollary}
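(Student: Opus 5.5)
By Theorem \ref{TM2}, $F_i(N)$ is odd exactly when $N$ is a perfect square or twice a perfect square. It therefore suffices to show that $N=2^\alpha p^{2s+1}B$ is neither of the form $x^2$ nor of the form $2x^2$, for integers $x\ge1$; note that $N\ge1$, so the case $x=0$ does not arise. The tool is the $p$-adic valuation $v_p$. Since $p$ is odd and $p\nmid B$, we have
\[
v_p(N)=v_p(2^\alpha)+v_p(p^{2s+1})+v_p(B)=0+(2s+1)+0=2s+1,
\]
which is odd.

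\emph{Case $N=x^2$.} Then $v_p(N)=2v_p(x)$ is even, contradicting $v_p(N)=2s+1$.

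\emph{Case $N=2x^2$.} Since $p\neq2$, we get $v_p(N)=v_p(2)+2v_p(x)=2v_p(x)$, which is again even and gives the same contradiction.

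So $N$ is neither a square nor twice a square. Theorem \ref{TM2} then gives $F_i(N)\equiv0\pmod2$ for $i\in\{0,1\}$.

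There is no real obstacle. The only points needing care are that $p$ odd ensures the factor $2^\alpha$, and the extra $2$ in $2x^2$, contribute nothing to $v_p$, and that $p\nmid B$ keeps $v_p(B)=0$. The hypothesis $N\ge1$ needed by Theorem \ref{TM2} holds automatically.
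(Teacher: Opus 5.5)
Your proof is correct and is essentially the argument the paper intends: the paper states this corollary without a separate proof, as a direct consequence of Theorem~\ref{TM2}. The key point is that the odd exponent $2s+1$ of the odd prime $p$ rules out both $N=x^2$ and $N=2x^2$, which is exactly your $p$-adic valuation step.
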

\begin{theorem}
For any integer $n\geq 1$, we have
\begin{equation}\label{L1}
nF_0(n)\equiv \sum_{k=1}^{n}\sigma(k)F_0(n-k) \pmod 2,
\end{equation}
and
\begin{equation}\label{L2}
nF_1(n)\equiv F_1(n)+\sum_{k=1}^{n}\sigma(k)F_1(n-k) \pmod 2.
\end{equation}
\end{theorem}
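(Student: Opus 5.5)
The plan is to work entirely modulo $2$ with generating functions, using Theorem \ref{TM2} and the classical parity of $\sigma(k)$. Write
$$S(q)=1+\sum_{r\ge1}q^{r^2}+\sum_{r\ge1}q^{2r^2}.$$
By Theorem \ref{TM2}, specifically \eqref{m4f03} and \eqref{m4f04}, we have $\sum_{n\ge0}F_0(n)q^n\equiv S(q)$ and $\sum_{n\ge0}F_1(n)q^n\equiv S(q)-1 \pmod 2$. I will also invoke the well-known fact that $\sigma(k)$ is odd if and only if $k$ is a perfect square or twice a perfect square. This holds because the odd part of $k$ contributes $\sigma$ odd exactly when every odd prime appears to an even power, and powers of $2$ always contribute an odd factor $1+2+\cdots+2^a$. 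Consequently
$$\sum_{k\ge1}\sigma(k)q^k\equiv S(q)-1\pmod 2.$$
Both congruences \eqref{L1} and \eqref{L2} then become coefficient comparisons of explicit power series over $\mathbb{F}_2$.

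For \eqref{L1}, the left side has generating function $q\frac{d}{dq}\sum F_0(n)q^n$. Its $n$-th coefficient $nF_0(n)$ is odd exactly when $n$ is odd and $F_0(n)$ is odd. Since $2r^2$ is even, Theorem \ref{TM2} shows this happens precisely when $n$ is an odd square, so the left side is $\equiv\sum_{r\ \mathrm{odd}}q^{r^2}$. The right side, for $n\ge1$, is the $n$-th coefficient of $(S-1)S=S^2-S$. Over $\mathbb{F}_2$ we have $S(q)^2\equiv S(q^2)=1+\sum_{r\ge1}q^{2r^2}+\sum_{r\ge1}q^{4r^2}$. Hence $S^2-S\equiv\sum_{r\ge1}q^{4r^2}+\sum_{r\ge1}q^{r^2}$, because the terms $q^{2r^2}$ and the constants cancel. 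The even squares $4r^2$ also cancel, which leaves exactly $\sum_{r\ \mathrm{odd}}q^{r^2}$, and the two sides agree.

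For \eqref{L2}, I will move $F_1(n)$ to the left and consider $(n-1)F_1(n)$. This is odd precisely when $n$ is even and $F_1(n)$ is odd, i.e.\ when $n$ is an even square $4r^2$ or twice a square $2r^2$ with $r\ge1$. So the left side is $\equiv\sum_{r\ge1}q^{4r^2}+\sum_{r\ge1}q^{2r^2}$. The right side is the coefficient of $(S-1)\cdot(S-1)=(S-1)^2\equiv S(q^2)-1$, which is exactly the same series. Equating coefficients of $q^n$ for $n\ge1$ finishes the proof.

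The only step needing genuine care is the parity criterion for $\sigma(k)$. If the paper prefers to stay within its own toolkit, I would instead derive $\sum\sigma(k)q^k\equiv S-1$ from the logarithmic derivative $q\frac{d}{dq}\log\frac{1}{f_1}=\sum\sigma(k)q^k$ together with $f_1^2\equiv f_2\pmod 2$ from \eqref{2t}. That route is more delicate because it involves division, so I would state the divisor-parity fact with its short multiplicative proof. The rest is bookkeeping with Frobenius squaring over $\mathbb{F}_2$.
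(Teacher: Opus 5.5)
Your proof is correct, but it takes a different route from the paper. The paper uses neither Theorem \ref{TM2} nor the parity of $\sigma$. Instead it takes the logarithmic derivative of the product forms \eqref{I3} and \eqref{I4}. Modulo $2$, every resulting Lambert series has even coefficients except the one coming from $(q;q^2)_\infty^{-1}$, which equals $\sum_{n\ge1}\bigl(\sum_{d\mid n,\,2\nmid d}d\bigr)q^n\equiv\sum_{n\ge1}\sigma(n)q^n$. The paper then multiplies back by the generating function, and the extra $F_1(n)$ in \eqref{L2} is simply the contribution of the factor $q$ (the $\log q$ term). This is the standard Euler-type recurrence. It works for any such product and needs no knowledge of the individual parities of $F_i(n)$.

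Your argument instead evaluates both sides explicitly in $\mathbb{F}_2[[q]]$. It uses Theorem \ref{TM2}, the multiplicative criterion that $\sigma(k)$ is odd if and only if $k$ is a square or twice a square (so $\sum\sigma(k)q^k\equiv S-1$), and the Frobenius identity $S(q)^2\equiv S(q^2)$. The bookkeeping checks out, including the disjointness of squares and twice squares and the cancellation of the $q^{4r^2}$ terms.

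What your route gains is extra information. It identifies both sides of \eqref{L1} as the indicator of odd squares, and both sides of \eqref{L2} as the indicator of $\{2r^2,4r^2: r\ge1\}$. This shows the theorem is a formal consequence of Theorem \ref{TM2} together with the parity of $\sigma$. It also never has to interpret a congruence between quotients of power series. What it costs is dependence on Theorem \ref{TM2} (hence on \eqref{t3h} and \eqref{f1t3h}) and on the divisor-parity lemma, whereas the paper's proof is self-contained given the product formulas.
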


\begin{proof}
Taking the logarithm on both sides of \eqref{I3}, we obtain
\begin{align}
\log\left(\sum_{n=0}^{\infty}F_0(n)q^n\right)
=&
\log(q^{16};q^{16})_{\infty}
+\log(-q^6;q^{16})_{\infty}
+\log(-q^{10};q^{16})_{\infty} \notag\\
&\label{last1}-\log(q;q^2)_{\infty}
-2\log(q^2;q^2)_{\infty}.
\end{align}
Differentiating \eqref{last1} with respect to $q$, we obtain
\begin{align}
\dfrac{\sum_{n=0}^{\infty}nF_0(n)q^{n-1}}{\sum_{n=0}^{\infty}F_0(n)q^n}
=&
-16\sum_{m=1}^{\infty}\dfrac{mq^{16m-1}}{1-q^{16m}}
+\sum_{m=0}^{\infty}\dfrac{(16m+6)q^{16m+5}}{1+q^{16m+6}} 
+\sum_{m=0}^{\infty}\dfrac{(16m+10)q^{16m+9}}{1+q^{16m+10}}\notag\\
&\label{last2}\hspace{3.4cm}+\sum_{m=0}^{\infty}\dfrac{(2m+1)q^{2m}}{1-q^{2m+1}} 
+4\sum_{m=1}^{\infty}\dfrac{mq^{2m-1}}{1-q^{2m}}.
\end{align}
Multiplying  \eqref{last2} by $q$, we obtain
\begin{align}
\dfrac{\sum_{n=0}^{\infty}nF_0(n)q^{n}}{\sum_{n=0}^{\infty}F_0(n)q^n}
=&
-16\sum_{m=1}^{\infty}\dfrac{mq^{16m}}{1-q^{16m}}
+\sum_{m=0}^{\infty}\dfrac{(16m+6)q^{16m+6}}{1+q^{16m+6}} 
+\sum_{m=0}^{\infty}\dfrac{(16m+10)q^{16m+10}}{1+q^{16m+10}}\notag\\
&\hspace{3.4cm}+\sum_{m=0}^{\infty}\dfrac{(2m+1)q^{2m+1}}{1-q^{2m+1}} 
+4\sum_{m=1}^{\infty}\dfrac{mq^{2m}}{1-q^{2m}}\notag\\
\equiv&
\sum_{m=0}^{\infty}\dfrac{(2m+1)q^{2m+1}}{1-q^{2m+1}}
\pmod 2\notag\\
\equiv&
\sum_{n=1}^{\infty}\left(\sum_{\substack{d\mid n\\2\nmid d}}d\right)q^n\pmod2\notag\\
\equiv&\label{last3}\sum_{n=1}^{\infty}\sigma(n)q^n\pmod2.
\end{align}
Now  \eqref{L1} follows easily from \eqref{last3}.\\

Again,  taking the logarithm on both sides of \eqref{I4}, we obtain
\begin{align}
\log \left(\sum_{n=0}^{\infty}F_1(n)q^n\right)
={}&
\log q
+\log(q^{16};q^{16})_{\infty}
+\log(-q^2;q^{16})_{\infty}
+\log(-q^{14};q^{16})_{\infty} \notag\\
&\label{last4}-\log(q;q^2)_{\infty}
-2\log(q^2;q^2)_{\infty}.
\end{align}
Differentiating \eqref{last4} with respect to $q$, we obtain
\begin{align}
\dfrac{\sum_{n=0}^{\infty}nF_1(n)q^{n-1}}{\sum_{n=0}^{\infty}F_1(n)q^n}
=&
\dfrac{1}{q}
-16\sum_{m=1}^{\infty}\dfrac{mq^{16m-1}}{1-q^{16m}}
+\sum_{m=0}^{\infty}\dfrac{(16m+2)q^{16m+1}}{1+q^{16m+2}} 
+\sum_{m=0}^{\infty}\dfrac{(16m+14)q^{16m+13}}{1+q^{16m+14}}\notag\\
&\label{last5}\hspace{4cm}+\sum_{m=0}^{\infty}\dfrac{(2m+1)q^{2m}}{1-q^{2m+1}} 
+4\sum_{m=1}^{\infty}\dfrac{mq^{2m-1}}{1-q^{2m}}.
\end{align}
Multiplying both sides of \eqref{last5} by $q$, we obtain
\begin{align}
\dfrac{\sum_{n=0}^{\infty}nF_1(n)q^{n}}{\sum_{n=0}^{\infty}F_1(n)q^n}
=&
1
-16\sum_{m=1}^{\infty}\dfrac{mq^{16m}}{1-q^{16m}}
+\sum_{m=0}^{\infty}\dfrac{(16m+2)q^{16m+2}}{1+q^{16m+2}} 
+\sum_{m=0}^{\infty}\dfrac{(16m+14)q^{16m+14}}{1+q^{16m+14}}\notag\\
&\hspace{4cm}+\sum_{m=0}^{\infty}\dfrac{(2m+1)q^{2m+1}}{1-q^{2m+1}} 
+4\sum_{m=1}^{\infty}\dfrac{mq^{2m}}{1-q^{2m}}\notag\\
\equiv&\label{last6}1+\sum_{n=1}^{\infty}\sigma(n)q^n
\pmod 2.
\end{align}
Now \eqref{L2} follows easily from \eqref{last6}.
\end{proof}
\begin{theorem}
For any positive integer $n,$ we have 
\begin{equation}\label{t3r}
F_0(n)\equiv
\begin{cases}
1 \pmod 4, & \text{if $n=x^2$ for some positive integer $x$},\\[4pt]
3 \pmod 4, & \text{if $n=2x^2$ for some odd positive integer $x$},\\[4pt]
1 \pmod 4, & \text{if $n=2x^2$ for some even positive integer $x$},\\[4pt]
0 \pmod 4, & \text{if $n\neq x^2,\ 2x^2$ for some positive integer $x,$}
\end{cases}
\end{equation}
and 
\begin{equation}\label{A35}
F_1(n)\equiv
\begin{cases}
1 \pmod 4, & \text{if  $n=x^2$ for some positive integer $x$}\\[4pt]
1 \pmod 4, & \text{if $n=2x^2$ for some odd positive integer $x$},\\[4pt]
3 \pmod 4, & \text{if $n=2x^2$ for some even positive integer $x$},\\[4pt]
0 \pmod 4, & \text{if $n\neq x^2,\ 2x^2$ for some positive integer $x.$}
\end{cases}
\end{equation}
\end{theorem}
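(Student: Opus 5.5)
The plan is to work from \eqref{t3h} and \eqref{f1t3h} and determine $f_2/f_1^2$ modulo $8$. Halving a congruence modulo $8$ between two series whose coefficients are all even gives a congruence modulo $4$ for $F_0(n)$ and $F_1(n)$. This refines the argument of Theorem \ref{TM2}, which only needed $f_2/f_1^2$ modulo $4$.

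\textbf{Step 1: the key identity.} Jacobi's triple product gives $\varphi(-q)=f_1^2/f_2$, so
\[
\frac{f_2}{f_1^2}=\frac{1}{\varphi(-q)}=\frac{1}{1+2S(q)},\qquad S(q):=\sum_{r\ge1}(-1)^r q^{r^2}.
\]
Expanding the geometric series modulo $8$ gives $1/(1+2S)\equiv 1-2S+4S^2 \pmod 8$.

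\textbf{Step 2: reduce $4S^2$.} Only $S^2$ modulo $2$ matters. Modulo $2$ we have $S(q)\equiv\sum_{r\ge1}q^{r^2}$, and squaring a series with coefficients in $\mathbb{F}_2$ is the Frobenius map. Hence $S^2\equiv\sum_{r\ge1}q^{2r^2}\pmod 2$, and
\[
\frac{f_2}{f_1^2}\equiv 1-2\sum_{r\ge1}(-1)^rq^{r^2}+4\sum_{r\ge1}q^{2r^2}\pmod 8 .
\]

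\textbf{Step 3: the second term.} Replacing $q$ by $q^2$ gives
\[
\frac{f_4}{f_2^2}\equiv 1-2\sum_{r\ge1}(-1)^rq^{2r^2}+4\sum_{r\ge1}q^{4r^2}\pmod 8 .
\]

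\textbf{Step 4: halve the sum and difference.} Adding and subtracting the two congruences and dividing by $2$ (legitimate, since every coefficient of both sides is even) gives
\begin{align*}
\sum_{n\ge0}F_0(n)q^n&\equiv 1-\sum_{r\ge1}(-1)^rq^{r^2}+2\sum_{r\ge1}q^{2r^2}-\sum_{r\ge1}(-1)^rq^{2r^2}+2\sum_{r\ge1}q^{4r^2}\pmod 4,\\
\sum_{n\ge0}F_1(n)q^n&\equiv -\sum_{r\ge1}(-1)^rq^{r^2}+2\sum_{r\ge1}q^{2r^2}+\sum_{r\ge1}(-1)^rq^{2r^2}-2\sum_{r\ge1}q^{4r^2}\pmod 4 .
\end{align*}

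\textbf{Step 5: read off coefficients.} A positive integer cannot be both a square and twice a square, so the cases in \eqref{t3r} and \eqref{A35} are disjoint. Note also that $4r^2=(2r)^2$ is always a square.
\begin{itemize}
\item For $n=x^2$, the coefficient of $q^n$ in the $F_0$ series is $-(-1)^x$, plus $2$ when $x$ is even. This is $1$ for $x$ odd and $-1+2=1$ for $x$ even.
\item For $n=2x^2$, the coefficient in the $F_0$ series is $2-(-1)^x$. This is $3$ for $x$ odd and $1$ for $x$ even.
\item For $n=x^2$, the coefficient in the $F_1$ series is $-(-1)^x$, minus $2$ when $x$ is even. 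This is $1$ for $x$ odd and $-3\equiv1$ for $x$ even.
\item For $n=2x^2$, the coefficient in the $F_1$ series is $2+(-1)^x$. This is $1$ for $x$ odd and $3$ for $x$ even.
\item For all other $n\ge1$, no term contributes, so the coefficient is $0$.
\end{itemize}
These values are exactly \eqref{t3r} and \eqref{A35}.

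\textbf{Main obstacle.} The delicate point is Step 2: one must keep the $4S^2$ term and identify it modulo $8$ correctly through the Frobenius argument. Dropping it, as in the mod $4$ computation of Theorem \ref{TM2}, would give the wrong residues at $n=x^2$ with $x$ even and at $n=2x^2$.
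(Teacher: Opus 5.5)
Your proof is correct and follows the paper's argument. The only difference is cosmetic: the paper uses $f_1^8\equiv f_2^4 \pmod 8$ to write $f_2/f_1^2\equiv f_1^6/f_2^3=\varphi(-q)^3 \pmod 8$, obtaining $1+6S+4S^2$, which agrees with your geometric-series expansion $1-2S+4S^2$; the reduction $S^2\equiv\sum_{r\ge1}q^{2r^2} \pmod 2$, the substitution $q\to q^2$, the halving, and the coefficient comparison then match step for step, with your explicit $F_1(n)$ case check filling in what the paper leaves as ``similarly''.
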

\begin{proof}
Using \eqref{2t} and \eqref{phi}, we obtain 
\begin{align}
\dfrac{f_2}{f_1^2}
=
\dfrac{f_2f_1^6}{f_1^8}&\equiv
\dfrac{f_1^6}{f_2^3}
=
\varphi(-q)^3 =\left(
1+2\sum_{r=1}^\infty(-1)^r q^{r^2}
\right)^3 \pmod 8\notag\\
&\label{t3l}\equiv
1
+
6\sum_{r=1}^\infty(-1)^r q^{r^2}
+
4\left(
\sum_{r=1}^\infty(-1)^r q^{r^2}
\right)^2\pmod 8 .
\end{align}
Also, we note that
\begin{equation}\label{t3m}
\left(
\sum_{r=1}^\infty(-1)^r q^{r^2}
\right)^2
=
\sum_{a=1}^{\infty}\sum_{b=1}^\infty(-1)^{a+b}q^{a^2+b^2} 
\equiv
\sum_{a=1}^\infty\sum_{b=1}^\infty q^{a^2+b^2}
\equiv
\sum_{r=1}^\infty q^{2r^2}
\pmod 2.
\end{equation}
Employing  \eqref{t3m} in \eqref{t3l},  we obtain
\begin{equation}\label{t3n}
\dfrac{f_2}{f_1^2}
\equiv
1
+
6\sum_{r\geq 1}(-1)^r q^{r^2}
+
4\sum_{r\geq 1}q^{2r^2}
\pmod 8 .
\end{equation}
Replacing $q$ by $q^2$ in \eqref{t3n}, we obtain
\begin{equation}\label{t3o}
\dfrac{f_4}{f_2^2}
\equiv
1
+
6\sum_{r\geq 1}(-1)^r q^{2r^2}
+
4\sum_{r\geq 1}q^{4r^2}
\pmod 8 .
\end{equation}
Employing \eqref{t3n} and \eqref{t3o} in \eqref{t3h}, we obtain
\begin{equation}\label{t3p}
\sum_{n\geq 0}F_0(n)q^n
\equiv
1
+
3\sum_{r\geq 1}(-1)^r q^{r^2}
+
2\sum_{r\geq 1}q^{2r^2} 
+3\sum_{r\geq 1}(-1)^r q^{2r^2}
+2\sum_{r\geq 1}q^{4r^2}
\pmod 4 .
\end{equation}
Now comparing the coefficients of the like powers of $q$ in \eqref{t3p}, we arrive at \eqref{t3r}. Similarly, \eqref{A35} can be proved by employing \eqref{t3n} and \eqref{t3o} in \eqref{f1t3h}  and comparing the  coefficients of the like powers of $q$.
\end{proof}

The proof of  Corollary \ref{cm4} is identical to the proof of Corollary \ref{cm2},  so we give the statement only and omit proof.
\begin{corollary}\label{cm4}Let $M$ and $r$ be integers such that  $M\geq1$ and $0\leq r<M$. Define
\begin{equation}
\mathcal Q_M
=
\{x^2\pmod M:x\in\mathbb Z\}
\cup
\{2x^2\pmod M:x\in\mathbb Z\}.\notag
\end{equation}
Then, for $i\in\{0,1\}$, we have 
\begin{equation}
F_i(Mn+r)\equiv0\pmod 4
\text{ for all $n\geq0$ }\iff
r\notin \mathcal Q_M.\notag
\end{equation}
\end{corollary}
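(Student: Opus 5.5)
The plan is to follow the proof of Corollary \ref{cm2} verbatim, with the parity statement of Theorem \ref{TM2} replaced by the mod $4$ description in the preceding theorem (\eqref{t3r} and \eqref{A35}). The one new observation needed is this: by that theorem, for $n\ge1$ we have $F_i(n)\equiv0\pmod 4$ exactly when $n$ is neither a perfect square nor twice a perfect square. In the remaining cases $F_i(n)$ is congruent to $1$ or $3 \pmod 4$, so it is certainly not $0 \pmod 4$. Thus, for $n\geq1$, the set of $n$ with $4\nmid F_i(n)$ equals the set of $n$ with $2\nmid F_i(n)$. The case $n=0$ needs a separate look. There $F_0(0)=1$ and $F_1(0)=0$, and $0=0^2$ does lie in $\mathcal Q_M$, so $n=0$ must be handled with the same care as in Corollary \ref{cm2}.

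\emph{Direction ($\Leftarrow$).} Suppose $r\notin\mathcal Q_M$ and $n\ge0$. Then $Mn+r\not\equiv x^2,\,2x^2\pmod M$ for every integer $x$. Hence $Mn+r$ is not of the form $x^2$ or $2x^2$, and in particular $Mn+r\neq0$, so $Mn+r\ge1$. The last case of \eqref{t3r} (respectively \eqref{A35}) then gives $F_i(Mn+r)\equiv0\pmod 4$.

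\emph{Direction ($\Rightarrow$).} Suppose $r\in\mathcal Q_M$, say $r\equiv x^2$ or $r\equiv 2x^2\pmod M$. As in Corollary \ref{cm2}, replace $x$ by $X=x+tM$ with $t$ large enough that $X\ge1$ and $X^2\ge r$ (respectively $2X^2\ge r$). Then $N=(X^2-r)/M$ (respectively $N=(2X^2-r)/M$) is a nonnegative integer, and $MN+r$ is a positive square or twice a positive square. By the preceding theorem, $F_i(MN+r)\equiv1$ or $3\pmod 4$, which is not $0\pmod 4$. This gives the contrapositive of ($\Rightarrow$).

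I do not expect a real obstacle. The only point needing care is choosing $X\ge1$, so that $MN+r$ is a \emph{positive} square or twice a positive square and the theorem applies; this avoids relying on the value of $F_i(0)$.
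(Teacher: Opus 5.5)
Your proposal is correct and matches the paper's intended argument: it reuses the proof of Corollary~\ref{cm2}, with the observation that, by \eqref{t3r} and \eqref{A35}, the integers $n\ge1$ with $F_i(n)\not\equiv0\pmod 4$ are exactly the squares and twice-squares. Your explicit checks that $Mn+r\neq0$ and that $X\ge1$ handle the $n=0$ edge case (where $F_1(0)=0$) more carefully than the paper's version of the argument does.
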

\begin{remark}\label{r4m2}\it
From  Corollary \ref{cm4}, for any integer $n\geq 0$, and $i\in\{0,1\}$, the  following congruences  for $F_i(n)$ can be easily obtained:
\begin{align}
&F_i(4n+3)\equiv0\pmod{4}, \notag\\
&F_i(6n+5)\equiv0\pmod{4}, \notag\\
&F_i(7n+3)\equiv F_i(7n+5)\equiv F_i(7n+6)\equiv0\pmod{4},\notag\\
&F_i(8n+3)\equiv F_i(8n+5)\equiv F_i(8n+6)
\equiv F_i(8n+7)\equiv0\pmod{4}, \notag\\
&F_i(9n+3)\equiv F_i(9n+6)\equiv0\pmod{4}, \notag\\
&F_i(10n+3)\equiv F_i(10n+7)\equiv0\pmod{4}, \notag\\
&F_i(12n+3)\equiv F_i(12n+5)\equiv F_i(12n+7)
\equiv F_i(12n+10)\equiv F_i(12n+11)\equiv0\pmod{4}, \notag\\
&F_i(14n+3)\equiv F_i(14n+5)\equiv F_i(14n+6)
\equiv F_i(14n+10) \equiv F_i(14n+12)\equiv F_i(14n+13)\equiv0\pmod{4}, \notag\\
&F_i(15n+7)\equiv F_i(15n+11)\equiv F_i(15n+13)
\equiv F_i(15n+14)\equiv0\pmod{4}, \mbox{and so on.}\notag
\end{align} 
\end{remark}
\begin{corollary}
Let $p$ be an odd prime. Let $\alpha$ and $s$ be any nonnegative integers.  If $B$ is a positive integer such that $p\nmid B$, then for $i\in\{0,1\}$
\begin{equation}
F_i\left(2^\alpha p^{2s+1}B\right)\equiv0\pmod 4.\notag
\end{equation}
\end{corollary}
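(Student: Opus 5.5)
By the mod 4 theorem \eqref{t3r}--\eqref{A35}, it suffices to show that $N=2^\alpha p^{2s+1}B$ is neither of the form $x^2$ nor of the form $2x^2$ with $x$ a positive integer. In that case the fourth case of each congruence gives $F_i(N)\equiv0\pmod 4$ for $i\in\{0,1\}$. This is exactly the mechanism that gives the mod $2$ analogue via Theorem \ref{TM2}.

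The core step is a $p$-adic valuation argument. Since $p\nmid B$ and $p$ is odd, $v_p(N)=2s+1$, which is odd. If $N=x^2$, then $v_p(N)=2v_p(x)$ is even, a contradiction. If $N=2x^2$, then, because $p\neq 2$, we have $v_p(N)=v_p(2)+2v_p(x)=2v_p(x)$, again even, a contradiction. Hence $N\notin\{x^2,2x^2\}$ and the conclusion follows.

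Nothing here is a real obstacle. The only point needing care is that the factor $2$ in $2x^2$ does not affect the $p$-adic valuation, which is exactly where oddness of $p$ is used. The power $2^\alpha$ and the cofactor $B$ play no role beyond $p\nmid B$. One also checks that $N\ge1$, so that the theorem, stated for positive $n$, applies; this holds since $B\ge1$.
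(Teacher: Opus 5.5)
Your proposal is correct and matches what the paper intends. The paper states this corollary without proof, as an immediate consequence of the mod $4$ theorem for $F_0$ and $F_1$. Your observation that $v_p(2^\alpha p^{2s+1}B)=2s+1$ is odd, so the number is neither $x^2$ nor $2x^2$, is exactly the step needed to place it in the ``otherwise'' case of that theorem.
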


\begin{theorem}
For any integer $n\geq1,$ we have 
\begin{equation}\label{e3}
F_0(2n)\equiv
\begin{cases}
3 \pmod 8, & \text{if } n=x^2 \text{ for some odd integer }x,\\[4pt]
1 \pmod 8, & \text{if } n=x^2 \text{ for some even integer }x,\\[4pt]
1 \pmod 8, & \text{if } n=2x^2 \text{ for some integer }x\geq0,\\[4pt]
0 \pmod 8, & \text{if } n\neq x^2,\ 2x^2
\text{ for every integer }x\geq0.
\end{cases}
\end{equation}
and 
\begin{equation}\label{A53}
F_1(2n)\equiv
\begin{cases}
1 \pmod 8, & \text{if } n=x^2 \text{ for some odd integer } x,\\
3 \pmod 8, & \text{if } n=x^2 \text{ for some even integer } x,\\
5 \pmod 8, & \text{if } n=2x^2 \text{ for some integer } x\geq1,\\
0 \pmod 8, & \text{if } n\neq x^2,\ 2x^2 \text{ for every integer } x\geq1.
\end{cases}
\end{equation}
\end{theorem}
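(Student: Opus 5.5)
The plan is to reduce everything to overpartitions and then to theta functions modulo $16$. By \eqref{f0p} and \eqref{f1p},
\[
F_0(2n)=\tfrac12\bigl(\overline p(2n)+\overline p(n)\bigr),\qquad F_1(2n)=\tfrac12\bigl(\overline p(2n)-\overline p(n)\bigr).
\]
So it suffices to determine $\overline p(2n)$ and $\overline p(n)$ modulo $16$ in terms of the shape of $n$.

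\textbf{Step 1: generating functions modulo $16$.} Since $\sum\overline p(n)q^n=f_2/f_1^2=1/\varphi(-q)$ and $\varphi(q)\varphi(-q)=\varphi(-q^2)^2$, we have
\[
\frac{1}{\varphi(-q)}=\frac{\varphi(q)}{\varphi(-q^2)^2}.
\]
Extracting even powers with \eqref{P5} gives
\[
\sum_{n\ge0}\overline p(2n)q^n=\frac{\varphi(q^2)}{\varphi(-q)^2}=\varphi(q^2)\frac{f_2^2}{f_1^4}.
\]
By \eqref{2t} with $t=4$ we have $f_1^{16}\equiv f_2^8\pmod{16}$. Hence
\[
\frac{f_2^2}{f_1^4}\equiv\frac{f_1^{12}}{f_2^6}=\varphi(-q)^6,\qquad \frac{f_2}{f_1^2}\equiv\varphi(-q)^7 \pmod{16}.
\]
Write $\varphi(-q)=1+2T$ with $T=\sum_{r\ge1}(-1)^rq^{r^2}$, and $\varphi(q^2)=1+2S$ with $S=\sum_{r\ge1}q^{2r^2}$. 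Expanding binomially and discarding multiples of $16$ gives
\[
\varphi(-q)^6\equiv1+12T+12T^2,\qquad \varphi(-q)^7\equiv 1+14T+4T^2+8T^3 \pmod{16}.
\]
Multiplying the first by $1+2S$, I obtain $\sum\overline p(2n)q^n$ and $\sum\overline p(n)q^n$ modulo $16$ as polynomials in $T$, $S$, $T^2$, $T^3$, $ST$.

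\textbf{Step 2: reduce the products to arithmetic functions.} Only certain residues of the products are needed:
\begin{itemize}[label=--]
\item $T^2$ modulo $4$ (it appears multiplied by $4$ and $12$);
\item $T^3$ and $ST$ modulo $2$, and $ST^2$ modulo $2$ (multiplied by $8$).
\end{itemize}
Modulo $2$, the argument of \eqref{t3m} gives $T^2\equiv S$, and similarly $T^3\equiv T\cdot S$. The genuinely new ingredients are $T^2\bmod 4$ and $TS\bmod 2$.
\begin{itemize}[label=--]
\item Write $4T^2=\varphi(-q)^2-2\varphi(-q)+1$, so the coefficient of $q^n$ in $4T^2$ is expressed through $(-1)^nr_2(n)$ by \eqref{gp}.
\item The coefficient of $q^n$ in $4TS$ is expressed through $r_{1,2}(n)$ with signs, since $\varphi(-q)\varphi(q^2)$ generates $\#\{u^2+2v^2=n\}$ weighted by $(-1)^u$.
\end{itemize}
I will then invoke \eqref{r_2}, \eqref{r1}, \eqref{B4} and \eqref{r2} to write these as divisor sums $\sum_{d\mid n}\chi(d)$ with $\chi=\left(\frac{-1}{\cdot}\right)$ or $\left(\frac{-2}{\cdot}\right)$. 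After removing the power of $2$ from $n$, I will evaluate them multiplicatively prime by prime.

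\textbf{Step 3: combine and read off the cases.} I substitute the resulting congruences into $\tfrac12(\overline p(2n)\pm\overline p(n))$, reducing modulo $8$ after halving. I then check the four cases of \eqref{e3} and \eqref{A53}:
\begin{itemize}[label=--]
\item $n$ an odd square;
\item $n$ an even square;
\item $n=2x^2$;
\item $n$ of neither shape.
\end{itemize}
Only the single terms $q^{r^2}$ and $q^{2r^2}$ from $T$ and $S$ contribute, together with the sign $(-1)^r$, which distinguishes odd from even $x$.

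\textbf{Main obstacle.} I expect the main obstacle to be the fourth case. There one must show that the divisor-sum contributions from $r_2$ and $r_{1,2}$ cancel modulo $8$ after combining. Individually these sums, such as $\prod(a_p+1)$ over primes $p\equiv1\pmod4$, need not vanish modulo $4$. So the cancellation must come from pairing the $T^2$ term from $\overline p(2n)$ with the $T^2$ term from $\overline p(n)$, with $n$ replaced appropriately. My first attempt will be to verify this pairing directly on the coefficients, checking small $n$ numerically (for example against $F_0(6)=24$ and $F_1(6)=16$ from the Remark). If the pairing fails, I will instead work directly with a $2$-dissection via \eqref{P6} and \eqref{P7} to obtain an expression for $\sum F_i(2n)q^n$ free of $r_2$-terms before reducing.
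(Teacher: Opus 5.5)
Your route is correct. After a common starting point it differs genuinely from the paper's. Both begin from $\sum F_0(2n)q^n=\tfrac12\bigl(\varphi(q^2)/\varphi(-q)^2+1/\varphi(-q)\bigr)$, which is \eqref{A15f0}. The paper then 2-dissects again with \eqref{P6} and \eqref{P7} and reduces modulo $16$ to $\varphi(q^2)+\varphi(q^4)+6q\psi(q^8)$. It rewrites this as $3\varphi(q)+\varphi(q^2)-2\varphi(q^4)$ via \eqref{P5}, and obtains $F_1(2n)$ by subtracting from the even part of \eqref{A57}. You instead use $\varphi(-q)^8\equiv 1\pmod{16}$ to write both series as polynomials in $T$ and $S$. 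This treats $F_0$ and $F_1$ symmetrically and needs no further dissection formulas.

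Your Step 2 and your anticipated main obstacle disappear, because the product terms cancel formally, before any divisor sums arise. So $r_2$ and $r_{1,2}$ are never needed. Let $A\equiv 1+12T+12T^2+2S+8ST+8ST^2$ and $B\equiv 1+14T+4T^2+8T^3$ be your expansions of $\sum\overline p(2n)q^n$ and $\sum\overline p(n)q^n$ modulo $16$. Use $T^2\equiv S$, $T^3\equiv ST$ and $ST^2\equiv S^2\equiv\sum_{r\ge1}q^{4r^2}\pmod 2$, all of which follow from what you already have. Then $12T^2+4T^2=16T^2\equiv 0$, $12T^2-4T^2=8T^2\equiv 8S$, and $8ST\pm 8T^3\equiv 0\pmod{16}$. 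Halving $A\pm B$ gives
\begin{align*}
\sum_{n\ge0}F_0(2n)q^n&\equiv 1+5T+S+4\sum_{r\ge1}q^{4r^2}\pmod 8,\\
\sum_{n\ge0}F_1(2n)q^n&\equiv -T+5S+4\sum_{r\ge1}q^{4r^2}\pmod 8.
\end{align*}
Every case of \eqref{e3} and \eqref{A53} can be read off directly from these. The sign $(-1)^x$ from $T$ and the extra $4$ at even squares separate odd $x$ from even $x$.

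No ``replacement of $n$'' is involved. Both series are in the same variable, and the $T^2$ terms simply combine with coefficients $12\pm 4$.

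A side benefit of your route is that every step is an honest congruence of whole series. The paper passes from \eqref{e12} to \eqref{e14} term by term, and the individual terms are not congruent modulo $16$. For example, the $q^2$-coefficients of $f_4^{19}/(f_2^{14}f_8^6)$ and $f_4^5/(f_2^2f_8^2)$ are $14$ and $2$. Those of $f_8^5/(f_2^4f_{16}^2)$ and $f_8^5/(f_4^2f_{16}^2)$ are $4$ and $0$. Only their sum is correct, through exactly the $12+4=16$ cancellation your computation makes visible.
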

\begin{proof}
Employing \eqref{P6} in \eqref{t3h}, we obtain
\begin{align}
\sum_{n=0}^{\infty}F_0(n)q^n
&=
\dfrac{1}{2}
\left(
\dfrac{f_8^5}{f_2^4f_{16}^2}
+
2q\dfrac{f_4^2f_{16}^2}{f_2^4f_8}
+
\dfrac{f_4}{f_2^2}
\right).
\label{A14f0}
\end{align}
Extracting the terms involving even powers of $q$ from both sides of \eqref{A14f0} and replacing $q^2$ by $q$, we obtain
\begin{equation}\label{A15f0}
\sum_{n=0}^{\infty}F_0(2n)q^n
=
\dfrac{1}{2}
\left(
\dfrac{f_4^5}{f_1^4f_8^2}
+
\dfrac{f_2}{f_1^2}
\right).
\end{equation}
Employing \eqref{P6} and \eqref{P7} in \eqref{A15f0}, we obtain 
\begin{align}
2\sum_{n=0}^{\infty}F_0(2n)q^n
&=
\dfrac{f_4^{19}}{f_2^{14}f_8^6}
+
\dfrac{f_8^5}{f_2^4f_{16}^2}
+
q\left(
4\dfrac{f_4^7f_8^2}{f_2^{10}}
+
2\dfrac{f_4^2f_{16}^2}{f_2^4f_8}
\right).
\label{e12}
\end{align}
Employing \eqref{2t} in \eqref{e12}, we obtain
\begin{equation}
2\sum_{n=0}^{\infty}F_0(2n)q^n
\equiv
\dfrac{f_4^5}{f_2^2f_8^2}
+
\dfrac{f_8^5}{f_4^2f_{16}^2}
+
6q\dfrac{f_{16}^2}{f_8}
\pmod {16}.
\label{e14}
\end{equation}
Employing \eqref{phi} and \eqref{psi} in \eqref{e14}, we obtain
\begin{equation}
2\sum_{n=0}^{\infty}F_0(2n)q^n
\equiv
\varphi(q^2)+\varphi(q^4)+6q\psi(q^8)
\pmod {16}.
\label{e16}
\end{equation}
Employing \eqref{P5} in \eqref{e16}, we obtain
\begin{equation}\label{e19}
2\sum_{n=0}^{\infty}F_0(2n)q^n
\equiv
\varphi(q^2)+\varphi(q^4)+3\left(\varphi(q)-\varphi(q^4)\right)
=
3\varphi(q)+\varphi(q^2)-2\varphi(q^4)
\pmod {16}.
\end{equation}
Now, employing \eqref{phi} in \eqref{e19}, we obtain
\begin{align}
2\sum_{n=0}^{\infty}F_0(2n)q^n
&\equiv 3\varphi(q)+\varphi(q^2)-2\varphi(q^4) \pmod {16} \notag\\
&\equiv3\left(1+2\sum_{r=1}^{\infty}q^{r^2}\right)
+\left(1+2\sum_{r=1}^{\infty}q^{2r^2}\right)
-2\left(1+2\sum_{r=1}^{\infty}q^{4r^2}\right) \pmod {16} \notag\\
&\equiv2+6\sum_{r=1}^{\infty}q^{r^2}
+2\sum_{r=1}^{\infty}q^{2r^2}
-4\sum_{r=1}^{\infty}q^{4r^2} \pmod {16} \notag\\
&\equiv2+6\left(
\sum_{\substack{r\geq 1\\ r\ {\rm odd}}}q^{r^2}
+\sum_{r=1}^{\infty}q^{4r^2}
\right)
+2\sum_{r=1}^{\infty}q^{2r^2}
-4\sum_{r=1}^{\infty}q^{4r^2} \pmod {16} \notag\\
&\equiv2+6\sum_{\substack{r\geq 1\\ r\ {\rm odd}}}q^{r^2}
+2\sum_{r=1}^{\infty}q^{4r^2}
+2\sum_{r=1}^{\infty}q^{2r^2} \pmod {16} \notag\\
&\equiv2+2\left(
\sum_{\substack{r\geq 1\\ r\ {\rm odd}}}q^{r^2}
+\sum_{r=1}^{\infty}q^{4r^2}
\right)
+4\sum_{\substack{r\geq 1\\ r\ {\rm odd}}}q^{r^2}
+2\sum_{r=1}^{\infty}q^{2r^2} \pmod {16} \notag\\
&\equiv2+2\sum_{r=1}^{\infty}q^{r^2}
+2\sum_{r=1}^{\infty}q^{2r^2}
+4\sum_{\substack{r\geq 1\\ r\ {\rm odd}}}q^{r^2} \pmod {16},
\end{align}which is equivalent to
\begin{equation}
\sum_{n=0}^{\infty}F_0(2n)q^n
\equiv
1
+
\sum_{r=1}^{\infty}q^{r^2}
+
\sum_{r=1}^{\infty}q^{2r^2}
+
2\sum_{\substack{r\geq1\\ r\ {\rm odd}}}q^{r^2}
\pmod 8.
\label{e23}
\end{equation}
Comparing the coefficients of the like powers of $q$ in \eqref{e23}, we arrive at  \eqref{e3}.

Again, adding \eqref{t3h} and \eqref{f1t3h}, and using \eqref{t3n}, we obtain
\begin{equation}\label{A57}
\sum_{n=0}^{\infty}\left(F_0(n)+F_1(n)\right)q^n
=
\dfrac{f_2}{f_1^2}
\equiv
1+6\sum_{r=1}^{\infty}(-1)^rq^{r^2}
+
4\sum_{r=1}^{\infty}q^{2r^2}
\pmod 8.
\end{equation}
Extracting the terms involving even powers of $q$ from both sides of \eqref{A57} and replacing $q^2$ by $q$, we obtain
\begin{equation}\label{A59}
\sum_{n=0}^{\infty}\left(F_0(2n)+F_1(2n)\right)q^n
\equiv
1+6\sum_{r=1}^{\infty}q^{2r^2}
+
4\sum_{r=1}^{\infty}q^{r^2}
\pmod 8.
\end{equation}
Employing \eqref{e23} in \eqref{A59}, we obtain
\begin{align}
\sum_{n=0}^{\infty}F_1(2n)q^n
&\equiv\left(
1+4\sum_{r=1}^{\infty}q^{r^2}
+
6\sum_{r=1}^{\infty}q^{2r^2}
\right)-\left(1+\sum_{r=1}^{\infty}q^{r^2}
+
\sum_{r=1}^{\infty}q^{2r^2}
+
2\sum_{\substack{r\geq1\\ r\ \mathrm{odd}}}q^{r^2}\right)
\pmod 8\notag\\
&\equiv
\left(
1+4\sum_{r=1}^{\infty}q^{r^2}
+
6\sum_{r=1}^{\infty}q^{2r^2}
\right)-
\left(
1+3\sum_{r=1}^{\infty}q^{r^2}
+
\sum_{r=1}^{\infty}q^{2r^2}
-
2\sum_{r=1}^{\infty}q^{4r^2}
\right)
\pmod 8
\notag\\
&\equiv
\sum_{r=1}^{\infty}q^{r^2}
+
5\sum_{r=1}^{\infty}q^{2r^2}
+
2\sum_{r=1}^{\infty}q^{4r^2}
\pmod 8.
\label{A63}
\end{align}
Now \eqref{A53} follows easily from \eqref{A63}.
\end{proof}

Proof of Corollary \ref{22} stated below is identical to the proof of Corollary \ref{cm2}, so omitted.
\begin{corollary}\label{22}
Let $M$ and $r$ be integers such that  $M\geq1$ and $0\leq r<M$. Define
\begin{equation}
Q_M=\{x^2 \pmod M:x\in\mathbb{Z}\}\cup\{2x^2 \pmod M:x\in\mathbb{Z}\}.\notag
\end{equation}
Then for $i\in\{0,1\}$,
\begin{equation}
F_i(2(Mn+r))\equiv 0 \pmod 8 \quad \text{for all } n\geq 0
\iff
r\notin Q_M.\notag
\end{equation}
\end{corollary}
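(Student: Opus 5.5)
The plan is to mirror the proof of Corollary \ref{cm2}, with the mod $8$ description of $F_i(2n)$ from \eqref{e3} and \eqref{A53} playing the role that Theorem \ref{TM2} played there. Both statements give the same dichotomy: for $m\ge1$, $F_i(2m)\equiv0\pmod 8$ when $m$ is neither a square nor twice a square. When $m=x^2$ or $m=2x^2$ with $x\ge1$, the residue is one of $1,3,5$, which is odd and in particular nonzero modulo $8$. I will record this first as a lemma, taken directly from \eqref{e23} and \eqref{A63}: for $m\ge1$, $F_i(2m)\not\equiv0\pmod 8$ iff $m\in\{x^2,2x^2:x\ge1\}$.

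One boundary point must be handled separately. For $m=0$ we have $F_i(0)\in\{1,0\}$, since $F_0(0)=1$ and $F_1(0)=0$. This affects the case $r=0$, $n=0$, but $0=0^2$ already puts $r=0$ in $Q_M$, so it does no harm to the equivalence. I will check this explicitly.

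For the direction ($\Leftarrow$), suppose $r\notin Q_M$ and, for contradiction, $F_i(2(Mn+r))\not\equiv0\pmod 8$ for some $n\ge0$. Then $Mn+r$ cannot be $0$, because $0\in Q_M$ would force $r\in Q_M$. So $Mn+r\ge1$, and by the lemma $Mn+r=x^2$ or $Mn+r=2x^2$. Reducing modulo $M$ gives $r\in Q_M$, a contradiction.

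For the direction ($\Rightarrow$), suppose $r\in Q_M$, say $r\equiv x^2$ or $r\equiv 2x^2\pmod M$. Replace $x$ by $X=x+tM$ with $t$ large, so that $X\ge1$ and $X^2\ge r$ (respectively $2X^2\ge r$). Set $N=(X^2-r)/M$ (respectively $N=(2X^2-r)/M$), which is a nonnegative integer with $MN+r=X^2$ (respectively $2X^2$) and $MN+r\ge1$. The lemma then gives $F_i(2(MN+r))\equiv1,3,$ or $5\pmod 8$, which is nonzero, contradicting the hypothesis.

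The only real care point is the lemma's precise case list. The statements \eqref{e3} and \eqref{A53} allow $x\ge0$ in some lines, so I would read the nonzero residues off the generating-function congruences \eqref{e23} and \eqref{A63} directly. For $m\ge1$, the coefficient of $q^m$ is a sum of contributions $1$ (or $3$) for squares and $1$ (or $5$, plus $2$ for fourth-power-type terms) for twice-squares. Since $m$ cannot be both a square and twice a square, exactly one family contributes, and the resulting residue is odd. That parity check confirms the residue is never $0\pmod 8$, and the rest of the argument is routine.
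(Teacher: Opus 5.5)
Your proof is correct and follows the paper's intended route. The paper says the proof is identical to that of Corollary \ref{cm2}, with the mod $8$ description of $F_i(2m)$ from \eqref{e23} and \eqref{A63} replacing Theorem \ref{TM2}; your explicit handling of $m=0$ (where $F_1(0)=0$) and your insistence on $X\ge1$ are slightly more careful than that sketch.

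One small slip in your last paragraph: the terms $2\sum_{r\geq1}q^{4r^2}$ in \eqref{A63} fall on even squares, not on twice-squares. Since they contribute only even amounts, your conclusion still holds: $F_i(2m)$ is odd, hence nonzero mod $8$, whenever $m\in\{x^2,2x^2\}$. This also follows directly from Theorem \ref{TM2}, because $2m$ is a square or twice a square exactly when $m$ is.
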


\begin{remark}\it From Corollary \ref{22}, for any integer $n\geq 0$, and $i\in\{0,1\}$, the  following  congruences modulo $8$  for $F_i(n)$ can be easily obtained:
\begin{align}
&F_i(8n+6)\equiv 0 \pmod 8,\notag\\
&F_i(12n+10)\equiv 0 \pmod 8,\notag\\
&F_i(14n+6)\equiv F_i(14n+10)\equiv F_i(14n+12)\equiv 0 \pmod 8,\notag\\
&F_i(16n+6)\equiv F_i(16n+10)\equiv F_i(16n+12)\equiv F_i(16n+14)\equiv 0 \pmod 8,\notag\\
&F_i(18n+6)\equiv F_i(18n+12)\equiv 0 \pmod 8,\notag\\
&F_i(20n+6)\equiv F_i(20n+14)\equiv 0 \pmod 8,\notag\\
&F_i(24n+6)\equiv F_i(24n+10)\equiv F_i(24n+14)\equiv F_i(24n+20)\equiv F_i(24n+22)\equiv 0 \pmod 8,\notag\\
&F_i(28n+6)\equiv F_i(28n+10)\equiv F_i(28n+12)\equiv F_i(28n+20)\equiv F_i(28n+24)
\equiv F_i(28n+26)\equiv 0 \pmod 8,\notag\\
&F_i(30n+14)\equiv F_i(30n+22)\equiv F_i(30n+26)\equiv F_i(30n+28)\equiv 0 \pmod 8,\notag\\
&\hspace{5 cm}\vdots\notag
\end{align}
\end{remark}
In the remaining theorems of this section, we will use the functions  $\varepsilon(n)$ and $\mathcal{D}(n)$, which are defined, for any integer $n\geq 0$, 
\begin{equation}\label{e1}
\varepsilon(n):=
\begin{cases}
1, & \text{if } n=a(a+1) \text{ for some  integer } a\geq0,\\
0, & \text{otherwise}
\end{cases}
\end{equation}
and 
\begin{equation}\label{fg2}
\mathcal{D}(n):=
\sum_{d\mid 4n+1}\left(\dfrac{-1}{d}\right)
+
\sum_{d\mid 4n+1}\left(\dfrac{-2}{d}\right),
\end{equation}
where $\left(\dfrac{\cdot}{\cdot}\right)$ denotes the Jacobi symbol. 

\begin{theorem}
For any integer $n\ge 0$ and  $i\in\{0,1\}$, we have 
\begin{equation}\label{fg3}
F_i(4n+1)\equiv
\varepsilon(n)+2\bigl(\mathcal{D}(n)-2\varepsilon(n)\bigr)
\pmod 8.
\end{equation}
Equivalently,
\begin{equation}\label{fg4}
F_i(4n+1)\equiv
\begin{cases}
0 \pmod 8, & \text{if } \varepsilon(n)=0 \text{ and } \mathcal{D}(n)\equiv0\pmod4,\\
4 \pmod 8, & \text{if } \varepsilon(n)=0 \text{ and } \mathcal{D}(n)\equiv2\pmod4,\\
1 \pmod 8, & \text{if } \varepsilon(n)=1 \text{ and } \mathcal{D}(n)\equiv2\pmod4,\\
5 \pmod 8, & \text{if } \varepsilon(n)=1 \text{ and } \mathcal{D}(n)\equiv0\pmod4.
\end{cases}
\end{equation}
\end{theorem}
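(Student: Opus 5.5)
The plan is to reduce the statement to a congruence for the overpartition function, get an exact $4$-dissection modulo $16$, and then compare with the theta-function generating functions of $r_2(4n+1)$ and $r_{1,2}(4n+1)$ from the two lemmas of Section 2.

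\textbf{Reduction.} Since $4n+1$ is odd, \eqref{f0p} and \eqref{f1p} give
\[
F_0(4n+1)=F_1(4n+1)=\tfrac12\,\overline{p}(4n+1).
\]
So the claim for both values of $i$ is the same claim, and it is enough to determine $\overline{p}(4n+1)$ modulo $16$. By \eqref{ovp} and \eqref{phi},
\[
\sum_{n\ge0}\overline{p}(n)q^n=\frac{f_2}{f_1^2}=\frac{1}{\varphi(-q)}.
\]

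\textbf{Step 1: an exact product modulo $16$.} I will iterate the identity $\varphi(q)\varphi(-q)=\varphi(-q^2)^2$, which follows from the product forms in \eqref{phi}. This gives
\[
\frac{1}{\varphi(-q)}=\frac{\varphi(q)\,\varphi(q^2)^2\,\varphi(q^4)^4}{\varphi(-q^8)^8}.
\]
Every $\varphi$ has the form $1+2x$ with $x$ having integer coefficients. Hence $\varphi(\pm q^k)^8\equiv1\pmod{16}$ and $\varphi(q^4)^4\equiv1\pmod 8$. Therefore
\[
\frac{1}{\varphi(-q)}\equiv\varphi(q)\,\varphi(q^2)^2\,\varphi(q^4)^4\pmod{16}.
\]

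\textbf{Step 2: extracting the $q^{4n+1}$ terms.} I will apply \eqref{P5} twice, in the forms $\varphi(q)=\varphi(q^4)+2q\psi(q^8)$ and $\varphi(q^2)=\varphi(q^8)+2q^2\psi(q^{16})$.

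The only source of exponents $\equiv1\pmod4$ is $2q\psi(q^8)$, multiplied by the part of $\varphi(q^2)^2\varphi(q^4)^4$ with exponents $\equiv0\pmod4$. That part is $\bigl(\varphi(q^8)^2+4q^4\psi(q^{16})^2\bigr)\varphi(q^4)^4$. Because of the factor $2$ in front, this part only needs to be known modulo $8$, so the factor $\varphi(q^4)^4$ may be dropped.

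Halving the result and replacing $q^4$ by $q$ gives the working congruence
\[
\sum_{n\ge0}F_i(4n+1)q^n\equiv\psi(q^2)\varphi(q^2)^2+4q\,\psi(q^2)\psi(q^4)^2\pmod 8 .
\]

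\textbf{Step 3: identifying the right side of \eqref{fg3}.} The same dissections yield exact generating functions for the two representation numbers:
\begin{itemize}
\item the $q^{4n+1}$ part of $\varphi(q)^2$ is $4q\,\varphi(q^4)\psi(q^8)$, so $\sum_{n\ge0} r_2(4n+1)q^n=4\varphi(q)\psi(q^2)$;
\item the $q^{4n+1}$ part of $\varphi(q)\varphi(q^2)$ is exactly $2q\,\psi(q^8)\varphi(q^8)$, so $\sum_{n\ge0} r_{1,2}(4n+1)q^n=2\psi(q^2)\varphi(q^2)$.
\end{itemize}
Combining these with \eqref{r_2.4n+1} and \eqref{B3} gives
\[
\sum_{n\ge0}\mathcal D(n)q^n=\varphi(q)\psi(q^2)+\varphi(q^2)\psi(q^2).
\]
Also, $\psi(q)=\sum_{a\ge0} q^{a(a+1)/2}$, so $\psi(q^2)=\sum_{n\ge0}\varepsilon(n)q^n$ exactly.

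It therefore remains to show, modulo $8$,
\[
\psi(q^2)\varphi(q^2)^2+4q\,\psi(q^2)\psi(q^4)^2\equiv-3\psi(q^2)+2\psi(q^2)\bigl(\varphi(q)+\varphi(q^2)\bigr).
\]

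\textbf{Main obstacle.} This final congruence is where I expect the work to be. My plan is to write $\varphi(q^2)=1+2y$ and $\varphi(q)=1+2x$, expand both sides, and cancel the common factor $\psi(q^2)$. The identity then reduces to a congruence between $x$, $y$, $y^2$ and $q\psi(q^4)^2$ modulo $4$ or modulo $2$. I will settle that congruence by parity arguments of the kind used in \eqref{t3m}: the square of a series is congruent to the series in $q^2$ modulo $2$, and \eqref{P5} supplies the relation between $x$ and $y$.

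If a discrepancy appears, I will recheck Step 2 first, in particular whether the terms $4q^4\psi(q^{16})^2$ and $\varphi(q^4)^4$ were handled with enough precision.

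Once \eqref{fg3} is established, \eqref{fg4} follows by checking the four combinations of $\varepsilon(n)\in\{0,1\}$ and $\mathcal D(n)\bmod 4$. For this I will note that $\mathcal D(n)$ is always even when it matters: by \eqref{fg3} itself $F_i(4n+1)$ has the parity of $\varepsilon(n)$, and this agrees with Theorem \ref{TM2}.
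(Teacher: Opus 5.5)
Your proof is correct, but it finishes by a different route from the paper's.

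\textbf{The paper's route.} The paper derives the exact identity $\sum_{n\ge0}F_0(4n+1)q^n=f_2^{13}/(f_1^{12}f_4^2)$ from \eqref{P6} and \eqref{P7}, and reduces it modulo $8$ to $\psi(q^2)\varphi(q)^2$. It then counts. For $m\ge1$, $r_2(m)\equiv4$ or $0\pmod 8$ according as the odd part of $m$ is or is not a square. This gives $F_0(4n+1)\equiv\varepsilon(n)+4|\mathcal{E}(n)|\pmod 8$. Finally $|\mathcal{E}(n)|$ is evaluated by counting lattice points on $u^2+v^2=4n+1$ and $u^2+2v^2=4n+1$.

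\textbf{Your route.} You reach the same intermediate series by another path. Indeed $\varphi(q)^2=\varphi(q^2)^2+4q\psi(q^4)^2$, so your working congruence is exactly $\psi(q^2)\varphi(q)^2$. You then also write the right side of \eqref{fg3} as a theta expression, $\sum_{n\ge0}\bigl(2\mathcal{D}(n)-3\varepsilon(n)\bigr)q^n=\psi(q^2)\bigl(2\varphi(q)+2\varphi(q^2)-3\bigr)$. The theorem thus becomes a congruence between formal power series. This removes the lattice-point bookkeeping and the analysis of $r_2(m)$ modulo $8$. Passing through $\overline{p}(4n+1)/2$ also handles $i=0$ and $i=1$ at once.

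The step you flag as the main obstacle is short. With $\varphi(q)=1+2x$ and $\varphi(q^2)=1+2y$, it becomes $y^2+q\psi(q^4)^2\equiv x\pmod 2$. This holds because $y^2\equiv\sum_{r\ge1}q^{4r^2}$ and $q\psi(q^4)^2\equiv q\psi(q^8)\pmod 2$, while \eqref{P5} gives exactly $x=\sum_{r\ge1}q^{4r^2}+q\psi(q^8)$.

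\textbf{Trade-off.} The paper's route yields, as a by-product, the explicit combinatorial congruence $F_i(4n+1)\equiv\varepsilon(n)+4|\mathcal{E}(n)|\pmod 8$. Yours is shorter and stays entirely within theta-function manipulations.

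\textbf{One justification to replace.} You argue that $\mathcal{D}(n)$ is even because $F_i(4n+1)$ has the parity of $\varepsilon(n)$. But \eqref{fg3} gives that parity whatever the parity of $\mathcal{D}(n)$, so it says nothing about $\mathcal{D}(n)$. The four implications in \eqref{fg4} follow from \eqref{fg3} directly. To see that they exhaust all cases, use your own formula: $\varphi(q)+\varphi(q^2)=2+2x+2y$, so $\mathcal{D}(n)$ is always even.
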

\begin{proof}
    Extracting the terms involving odd powers of $q$ from \eqref{A14f0}, dividing by $2q,$ and replacing $q^2$ by $q$,  we obtain
\begin{equation}\label{e24}
\sum_{n=0}^{\infty}F_0(2n+1)q^n
=
\dfrac{f_2^2f_8^2}{f_1^4f_4}.
\end{equation}
Employing \eqref{P7} in \eqref{e24}, we obtain
\begin{equation}\label{e25}
\sum_{n=0}^{\infty}F_0(2n+1)q^n
=
\dfrac{f_2^2f_8^2}{f_4}
\left(
\dfrac{f_4^{14}}{f_2^{14}f_8^4}
+
4q\dfrac{f_4^2f_8^4}{f_2^{10}}
\right) 
=
\dfrac{f_4^{13}}{f_2^{12}f_8^2}
+
4q\dfrac{f_4f_8^6}{f_2^8}.
\end{equation}
Extracting the terms involving even powers of $q$ from \eqref{e25} and replacing $q^2$ by $q,$ we obtain
\begin{equation}\label{e26}
\sum_{n=0}^{\infty}F_0(4n+1)q^n
=
\dfrac{f_2^{13}}{f_1^{12}f_4^2}.
\end{equation}
Employing \eqref{2t} in \eqref{e26},
we obtain
\begin{equation}\label{phps}
\sum_{n=0}^{\infty}F_0(4n+1)q^n
\equiv
\dfrac{f_2f_4^2}{f_1^4}
\equiv\dfrac{f_4^2}{f_2}
\left(\dfrac{f_2}{f_1^2}\right)^2
\equiv\psi(q^2)\phi(q)^2
\pmod 8.
\end{equation}
Using \eqref{psi} and \eqref{gp} in \eqref{phps}, we obtain
\begin{equation}\label{fg8}
\sum_{n=0}^{\infty}F_0(4n+1)q^n
\equiv
\left(\sum_{a=0}^{\infty}q^{a(a+1)}\right)
\left(\sum_{m=0}^{\infty}r_2(m)q^m\right) 
\equiv
\sum_{n=0}^{\infty}
\left(
\sum_{\substack{a\geq0\\a(a+1)\leq n}}
r_2\bigl(n-a(a+1)\bigr)
\right)q^n.
\end{equation}
By  Fundamental Theorem of Arithmetic (FTA), any integer $m\geq1$ can be expressed as $m=2^\alpha M,$ where $2\nmid M$ and $\alpha\geq0.$ Thus, from \eqref{r_2}, we obtain
\begin{equation}\label{fg8e}
r_2(m)
=
4\sum_{\substack{d\mid m\\2\nmid d}}(-1)^{(d-1)/2}
=
4\sum_{d\mid M}(-1)^{(d-1)/2} 
\equiv
4\sum_{d\mid M}1
\equiv
4d(M)
\pmod 8,
\end{equation}
where $d(M)$ is the number of positive divisors of $M.$ Again, if $p_i$ are distinct primes  and $\alpha_i\geq1$ are integers for ($i=1,\;2,\;\ldots$), then by FTA, 
$M=p_1^{\alpha_1}p_2^{\alpha_2}\cdots p_k^{\alpha_k}$
and 
\begin{equation}\label{fg10}
d(M)
=
\prod_{j=1}^{k}(\alpha_j+1).
\end{equation}
Employing \eqref{fg10} in \eqref{fg8e}, we obtain 
\begin{equation}\label{fg11}
r_2(m)\equiv
\begin{cases}
4 \pmod 8, & \text{if the odd part of }m\text{ is a perfect square},\\
0 \pmod 8, & \text{otherwise}.
\end{cases}
\end{equation}
Employing \eqref{fg11} in \eqref{fg8} and further comparing coefficients of like powers of $q$, we obtain
\begin{equation}\label{fg12}
F_0(4n+1)
\equiv
\varepsilon(n)+4~|\mathcal{E}(n)|
\pmod 8,
\end{equation}
where
$
\mathcal{E}(n)=
\left\{
a\geq0:
n-a(a+1)=2^\alpha x^2
\text{ for some integer } \alpha\geq0,\ x\geq1
\right\}
$ and $|\cdot|$ denotes the order of the set. 

Define, 
\begin{align}
\label{fg15}\mathcal{E}_1(n)
&=
\left\{
a\geq0:
n-a(a+1)=y^2
\text{ for some integer } y\geq1
\right\},\\
\intertext{and}
\label{fg16}\mathcal{E}_2(n)
&=
\left\{
a\geq0:
n-a(a+1)=2y^2
\text{ for some integer } y\geq1
\right\}.
\end{align}
Clearly, 
\begin{equation}\label{fg17}
|\mathcal{E}(n)|=|\mathcal{E}_1(n)|+|\mathcal{E}_2(n)|.
\end{equation}
Again,
$$r_2(4n+1)
=|\{(u,v)\in\mathbb Z^2:u^2+v^2=4n+1\}|.$$
This implies, for $(u,v)\in \{(u,v)\in\mathbb Z^2:u^2+v^2=4n+1\},$ 
both $u$ and $v$ cannot be even at the same time; that is, one of   $u$ and $v$ is even and the other is  odd. If $u=2a+1$ and $v=2y,$ for nonnegative integers $a,y\geq0$,  then 
\begin{equation}\label{ha3}
n-a(a+1)=y^2\iff
(2a+1)^2+(2y)^2=4n+1 .
\end{equation}
If $y=0,$ then by  \eqref{ha3},  $(u,v)=(\pm(2a+1),0),(0,\pm(2a+1))$; and if  $y\geq1,$ then  $(u,v)=(\pm(2a+1),\pm 2y),(\pm 2y,\pm(2a+1)).$
Therefore,
\begin{equation}\label{ha8}
r_2(4n+1)
=
4\varepsilon(n)+8|\mathcal E_1(n)|.
\end{equation}
Employing \eqref{r_2.4n+1} in \eqref{ha8}, we obtain
\begin{equation}
    |\mathcal E_1(n)|
=
\dfrac{1}{2}
\left(
\sum_{d\mid 4n+1}\left(\dfrac{-1}{d}\right)
-
\varepsilon(n)
\right).
\label{fg21}
\end{equation}
If we consider  $\mathcal{E}_2(n),$ then 
\begin{equation}\label{fg22}
n-a(a+1)=2y^2
\Longleftrightarrow
4n+1=(2a+1)^2+8y^2.
\end{equation}
Let $r_{1,2}(N)$ denote the number of representations of $N$ in the form $u^2+2v^2,$ where $(u,v)\in\mathbb{Z}^2.$\\ Then
\begin{equation}\label{fg23}
r_{1,2}(4n+1)=2\varepsilon(n)+4|\mathcal{E}_2(n)|.
\end{equation}
Employing \eqref{B3} in \eqref{fg23}, we obtain
\begin{equation}\label{fg25}
|\mathcal{E}_2(n)|
=
\dfrac{1}{2}
\left(
\sum_{d\mid 4n+1}\left(\dfrac{-2}{d}\right)
-
\varepsilon(n)
\right).
\end{equation}
Employing \eqref{fg21} and \eqref{fg25} in  \eqref{fg17},  we obtain
\begin{align}
|\mathcal{E}(n)|
&=
\dfrac{1}{2}
\left(
\sum_{d\mid 4n+1}\left(\dfrac{-1}{d}\right)
-
\varepsilon(n)
\right)
+
\dfrac{1}{2}
\left(
\sum_{d\mid 4n+1}\left(\dfrac{-2}{d}\right)
-
\varepsilon(n)
\right) \notag\\
&=
\dfrac{1}{2}
\left(
\sum_{d\mid 4n+1}\left(\dfrac{-1}{d}\right)
+
\sum_{d\mid 4n+1}\left(\dfrac{-2}{d}\right)
-
2\varepsilon(n)
\right) \notag\\
&=
\dfrac{1}{2}\bigl(\mathcal{D}(n)-2\varepsilon(n)\bigr).
\label{fg26}
\end{align}
Employing \eqref{fg26} in \eqref{fg12}, we obtain
\begin{equation}
F_0(4n+1)
\equiv
\varepsilon(n)+2\bigl(\mathcal{D}(n)-2\varepsilon(n)\bigr)
\pmod 8.
\end{equation}
This completes the proof of \eqref{fg3}. \\\\
Again,  from \eqref{fg26}, we note that
\begin{equation}
\mathcal{D}(n)-2\varepsilon(n)=2|\mathcal{E}(n)|,\notag
\end{equation}
Then the following four cases are possible:
\begin{align}
\varepsilon(n)=0,\quad \mathcal{D}(n)\equiv0\pmod4
&\Longrightarrow
F_0(4n+1)\equiv0\pmod8,\notag\\
\varepsilon(n)=0,\quad \mathcal{D}(n)\equiv2\pmod4
&\Longrightarrow
F_0(4n+1)\equiv4\pmod8,\notag\\
\varepsilon(n)=1,\quad \mathcal{D}(n)\equiv2\pmod4
&\Longrightarrow
F_0(4n+1)\equiv1\pmod8,\notag\\
\varepsilon(n)=1,\quad \mathcal{D}(n)\equiv0\pmod4
&\Longrightarrow
F_0(4n+1)\equiv5\pmod8.\notag
\end{align}
This proves \eqref{fg4} for the case $i=0$. It is easy to see that $F_0(2n+1)=F_1(2n+1),$ so the case  $i=1$ of \eqref{fg4} follows trivially.
\end{proof}

\begin{corollary}
For every integer $n\geq0$  and $i\in\{0,1\}$, we have
$$F_i(4n+1)\equiv0\pmod8 ~\text{if and only if } ~4n+1 \text{ is not a perfect square}$$
and 
$$\sum_{d\mid 4n+1}\left(\dfrac{-1}{d}\right)
+
\sum_{d\mid 4n+1}\left(\dfrac{-2}{d}\right)\equiv0\pmod4.$$ 
\end{corollary}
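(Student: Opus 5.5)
The corollary reads as a biconditional: $F_i(4n+1)\equiv0\pmod8$ holds if and only if $4n+1$ is not a perfect square and $\mathcal{D}(n)\equiv0\pmod4$. I will deduce it directly from \eqref{fg4} of the preceding theorem.

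The first step is to identify $\varepsilon(n)$ arithmetically. We have $n=a(a+1)$ for some $a\ge0$ if and only if $4n+1=4a^2+4a+1=(2a+1)^2$. Every odd perfect square $4n+1$ arises this way with $a=(x-1)/2$, where $x=\sqrt{4n+1}$. Hence $\varepsilon(n)=1$ exactly when $4n+1$ is a perfect square.

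The second step is to read off \eqref{fg4}. The four cases there are exhaustive, since $\mathcal{D}(n)$ is always even. This follows from \eqref{fg26}, which gives $\mathcal{D}(n)-2\varepsilon(n)=2|\mathcal{E}(n)|$. The residues in the four cases are $0,4,1,5 \pmod 8$. Among these, $F_i(4n+1)\equiv0\pmod8$ occurs only in the first case, namely $\varepsilon(n)=0$ and $\mathcal{D}(n)\equiv0\pmod4$. In every other case the residue is $4$, $1$ or $5$, all nonzero. Combining this with the first step gives both directions of the equivalence, for $i=0$ and also for $i=1$, because $F_0(2n+1)=F_1(2n+1)$.

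The only delicate point is interpreting the statement correctly. Taken literally as two separate conclusions, it would be false: for $4n+1=3\cdot 7=21$ we get $\mathcal{D}(5)=0+2$. So the condition on $\mathcal{D}$ must be part of the conjunction in the "iff". With that reading the argument is immediate, and the main check is the evenness of $\mathcal{D}(n)$, which \eqref{fg26} already supplies.
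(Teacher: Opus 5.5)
Your argument is correct and is essentially the paper's: identify $\varepsilon(n)=1$ with $4n+1$ being a perfect square, then read off from the preceding theorem when the residue vanishes. The paper works from \eqref{fg3} directly, where the residue is odd when $\varepsilon(n)=1$ and equals $2\mathcal{D}(n)$ when $\varepsilon(n)=0$, so it needs no exhaustiveness or evenness check.

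Your side remark contains an arithmetic error. For $4n+1=21$ one has $\left(\frac{-2}{7}\right)=\left(\frac{-2}{21}\right)=-1$, so $\mathcal{D}(5)=0+0=0$, and indeed $F_i(21)=4952\equiv 0\pmod 8$. A valid witness that the literal two-clause reading fails is $n=1$: there $\mathcal{D}(1)=2+0=2$ and $F_i(5)=12\equiv 4\pmod 8$, although $5$ is not a perfect square.
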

\begin{proof}
From \eqref{fg3}, we note that
\begin{align}\label{fg39}
F_i(4n+1)\equiv0\pmod8
&\Longleftrightarrow
\varepsilon(n)=0
\text{ and }
\mathcal{D}(n)\equiv0\pmod4.
\end{align}
Again, from \eqref{e1}, we note that
\begin{align}
\varepsilon(n)=1
&\Longleftrightarrow
n=a(a+1)\text{ for some integer }a\geq0 \notag\\
&\Longleftrightarrow
4n+1=4a(a+1)+1 \notag\\
&\Longleftrightarrow
4n+1=(2a+1)^2.\notag
\end{align}
Therefore,
\begin{equation}\label{fg41}
\varepsilon(n)=0
\Longleftrightarrow
4n+1 \text{ is not a perfect square}.
\end{equation} Combining \eqref{fg39}, \eqref{fg41}, and the definition of $\mathcal{D}(n)$ in  \eqref{fg2}, we obtain our desired result. 
\end{proof}

\begin{theorem}Define  $P_j=\dfrac{j(3j-1)}{2}$ for $j\in\mathbb Z$. Then for every integer $n\geq0,$  we have 
\begin{equation}\label{new1}
F_0(4n+3)\equiv
4\sum_{\substack{r,s\in\mathbb{Z}\\ n=2P_r+16P_s}}
(-1)^{r+s}
\pmod 8.
\end{equation}
Equivalently,
\begin{equation}\label{new2}
F_0(4n+3)\equiv
\begin{cases}
4 \pmod 8, & \text{if the number of representations } n=2P_r+16P_s \text{ is odd},\\
0 \pmod 8, & \text{if the number of representations } n=2P_r+16P_s \text{ is even}.
\end{cases}
\end{equation}
\end{theorem}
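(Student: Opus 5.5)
The plan is to derive a closed product for $\sum_{n\ge0}F_0(4n+3)q^n$ from the $2$-dissections already established. Then I reduce that product modulo $2$ to a product of two Euler products, each of which Euler's pentagonal number theorem \eqref{f} turns into a signed theta series.

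\textbf{Step 1 (dissection).} Start from \eqref{e25}:
\[
\sum_{n\ge0}F_0(2n+1)q^n=\dfrac{f_4^{13}}{f_2^{12}f_8^2}+4q\dfrac{f_4f_8^6}{f_2^8}.
\]
The first term involves only even powers of $q$ and the second only odd powers. I extract the odd powers, divide by $q$, and replace $q^2$ by $q$. This gives
\[
\sum_{n\ge0}F_0(4n+3)q^n=4\,\dfrac{f_2f_4^6}{f_1^8}.
\]

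\textbf{Step 2 (reduction).} Because of the factor $4$, it suffices to know $f_2f_4^6/f_1^8$ modulo $2$. I apply \eqref{2t} repeatedly in the form $f_k^2\equiv f_{2k}\pmod 2$. This gives $f_1^8\equiv f_8\pmod 2$ and $f_4^6\equiv f_8^3\pmod 2$, hence
\[
\dfrac{f_2f_4^6}{f_1^8}\equiv f_2f_8^2\equiv f_2f_{16}\pmod 2.
\]
Each step is legitimate modulo $2$ because all these series have constant term $1$, so they are invertible in $\mathbb{Z}[[q]]$. Consequently
\[
\sum_{n\ge0}F_0(4n+3)q^n\equiv 4f_2f_{16}\pmod 8.
\]

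\textbf{Step 3 (theta expansion).} Using \eqref{f} with $q\to q^2$ and $q\to q^{16}$, I write $f_2=\sum_{r\in\mathbb Z}(-1)^rq^{2P_r}$ and $f_{16}=\sum_{s\in\mathbb Z}(-1)^sq^{16P_s}$. Multiplying the two series and comparing coefficients of $q^n$ gives \eqref{new1}.

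\textbf{Step 4 (parity form).} Modulo $8$, the quantity $4\sum(-1)^{r+s}$ depends only on the parity of the number of pairs $(r,s)$ with $n=2P_r+16P_s$, since $(-1)^{r+s}\equiv1\pmod 2$. This gives \eqref{new2}.

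The only delicate point is the bookkeeping in Step 1, namely checking that the odd part of \eqref{e25} is exactly the second term. After that, the reduction in Step 2 is routine.
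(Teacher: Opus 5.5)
Your proposal is correct and matches the paper's proof step for step. You extract the odd part of \eqref{e25} to get $4f_2f_4^6/f_1^8$, reduce the cofactor modulo $2$ to $f_2f_{16}$ via \eqref{2t}, and expand with the pentagonal number theorem \eqref{f}; your remark that $(-1)^{r+s}\equiv 1\pmod 2$ makes explicit the passage to \eqref{new2}, which the paper leaves implicit.
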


\begin{proof}
Extracting the terms involving odd powers of $q$ from both sides of \eqref{e25},  dividing by $q$, and replacing $q^2$ by $q$, we obtain
\begin{equation}\label{A24}
\sum_{n=0}^{\infty}F_0(4n+3)q^n
=
4\dfrac{f_2f_4^6}{f_1^8}.
\end{equation}
Using \eqref{2t}, we obtain 
\begin{equation}\label{f2f6}
\dfrac{f_2f_4^6}{f_1^8}\equiv f_2 f_{16}\pmod 2.
\end{equation}
Employing \eqref{f2f6} in \eqref{A24}, we obtain 
\begin{equation}\label{A25}
\sum_{n=0}^{\infty}F_0(4n+3)q^n
\equiv4f_2f_{16}\pmod{8}.
\end{equation}
Employing \eqref{f} in \eqref{A25}, we obtain
\begin{align}
\sum_{n=0}^{\infty}F_0(4n+3)q^n
&\equiv4f_2f_{16}\notag\\
&=
4\left(\sum_{r=-\infty}^{\infty}(-1)^rq^{2P_r}\right)
\left(\sum_{s=-\infty}^{\infty}(-1)^sq^{16P_s}\right)
\notag\\
&=
4\sum_{r=-\infty}^{\infty}
\sum_{s=-\infty}^{\infty}
(-1)^{r+s}q^{2P_r+16P_s}
\notag\\
&\label{ls}=
4\sum_{n=0}^{\infty}
\left(
\sum_{\substack{r,s\in\mathbb{Z}\\ n=2P_r+16P_s}}
(-1)^{r+s}
\right)q^n\pmod 8.
\end{align}
Comparing the coefficients of $q^n$ on both sides of \eqref{ls},  we complete the proofs of \eqref{new1} and \eqref{new2}.
\end{proof}

\section{Concluding remarks} Recall that $F(n)$ is the number of two-color integer partitions of a non-negative integer $n$ wherein odd parts may appear in two colors (red and blue) and even parts appear in one color (blue); and \( H(n)\) is the number of partitions of \( n \) counted by \(F(n)\) such that the parts of the same color do not repeat.  In addition to  proving $q$-series identities \eqref{OP1} and \eqref{2op5.6}, we proved  some congruences for the restricted versions $F_0(n)$ and $F_1(n)$ of $F(n)$, where $F_0(n)\;(\text{resp. }F_1(n))$ denotes the number of partitions of  $n$ counted by $F(n)$ wherein the number of odd parts in red color is even (resp. odd).
Interested readers may investigate divisibility properties for the following  restricted versions of $F(n)$ and $H(n)$:\\
$\bullet$ $F_2(n)$: the number of partitions of \( n \) counted by \(F(n)\) such that the number of even parts is even,\\
$\bullet$ $F_3(n)$: the number of partitions of \( n \) counted by \(F(n)\) such that the number of even parts is odd,\\
$\bullet$ $H_0(n)$: the number of partitions of \( n \) counted by \(H(n)\) such that the number of even parts is even,\\
$\bullet$ $H_1(n)$: the number of partitions of \( n \) counted by \(H(n)\) such that the number of even parts is odd,\\
$\bullet$ $H_2(n)$: the number of partitions of \( n \) counted by \(H(n)\) such that the number of parts is even,\\
$\bullet$ $H_3(n)$: the number of partitions of \( n \) counted by \(H(n)\) such that the number of parts is odd.

\section*{\bf Declarations}

\noindent\textbf{Funding}: This research did not receive funding.\\
\noindent{\bf Author Contributions.} Both authors contributed equally to this work.
   
\noindent{\bf Conflict of Interest.} The authors declare that there is no conflict of interest regarding the publication of this paper.
 
\noindent{\bf Human and Animal Rights.} The authors declare that there is no research involving human participants or animals in the context of this paper.	
 
\noindent{\bf Data Availability Statement.} Data sharing is not applicable to this paper as no datasets were generated or analyzed during the current study.	

\bibliographystyle{plain}

\end{document}